\numberwithin{equation}{section}
\newcommand{\calE}{\mathcal{E}}
\newcommand{\calH}{\mathcal{H}}
\newcommand{\calK}{\mathcal{K}}
\newcommand{\calL}{\mathcal{L}}
\newcommand{\calT}{\mathcal{T}}
\newcommand{\HH}{\mathbb{H}}
\newcommand{\RR}{\mathbb{R}}
\newcommand{\NN}{\mathbb{N}}
\DeclareMathOperator{\leb}{Leb}
\DeclareMathOperator{\sh}{Sh}
\newcommand{\from}{\colon}
\newcommand{\bdy}{\partial}
\newtheorem{theorem}{Theorem}[section]
\newtheorem{lemma}[theorem]{Lemma}
\newtheorem{corollary}[theorem]{Corollary}
\newtheorem{proposition}[theorem]{Proposition}
\newtheorem{definition}[theorem]{Definition}
\newcommand{\teichmuller}{Teichm{\"u}ller{ }}
 \let\c@theorem=\c@subsection
 \let\c@conjecture=\c@subsection
 \let\c@lemma=\c@subsection
 \let\c@proposition=\c@subsection
 \let\c@claim=\c@subsection
 \let\c@question=\c@subsection
 \let\c@criterion=\c@subsection
 \let\c@vfconj=\c@subsection
 \let\c@definition=\c@subsection
 \let\c@notation=\c@subsection
 \let\c@remark=\c@subsection
 \let\c@example=\c@subsection
 \let\c@equation=\c@subsection
 \let\c@figure=\c@subsection
 \let\c@wrapfigure=\c@subsection
\begin{document}
\title[Singular stationary measures cannot be quasi-symmetrically straightened]{Stationary measures on the circle from hyperbolic surfaces with cusps cannot be straightened by quasi-symmetries}

\author[Azemar]{Aitor Azemar}
\address{\hskip-\parindent
     School of Mathematics and Statistics\\
     University of Glasgow\\
     University Place\\
      Glasgow\\
      G12 8QQ United Kingdom}
\email{Aitor.Azemar@glasgow.ac.uk}
%\thanks{The first author acknowledges support from the GEAR Network
  %(U.S. National Science Foundation grants DMS 1107452, 1107263,
  %1107367 ``RNMS: GEometric structures And Representation varieties'').}

\author[Gadre]{Vaibhav Gadre}
\address{\hskip-\parindent
  School of Mathematics and Statistics\\
     University of Glasgow\\
     University Place\\
      Glasgow\\
      G12 8QQ United Kingdom}
\email{Vaibhav.Gadre@glasgow.ac.uk}
%\thanks{}

%\subjclass[2020]{ 30F60, 32G15, 37D40, 53D25, 37A25}

\begin{abstract}
Stationary measures on the circle that arise from a large class of random walks on the fundamental group of a finite-area complete hyperbolic surface with cusps are singular with respect to the Lebesgue measure. 
In particular, it is sufficient for singularity that a stationary measure satisfies an exponential decay for cusp excursions with excursions being measured in the path metric on horocycles bounding cusps.
In this note, we settle a conjecture of McMullen by proving that the singularity of stationary measures satisfying such exponential decay is quasi-symmetrically stable, that is under push-forward by any quasi-symmetry of the circle the measure remains singular.
\end{abstract}

%%%%%%%%%%%%%%%%%%%%%%%%%%%%%%%%%%%%%%%%%%%%%%%%

\maketitle

%%%%%%%%%%%%%%%%%%%%%%%%%%%%%%%%%%%%%%%%%%%%%%%%

\section{Introduction}
\label{s:intro}

Two intervals on the circle are said to be adjacent and equal if they share an endpoint and have the same length.
A circle map is said to be a \emph{quasi-symmetry} if there exists $K \geqslant 1$ such that for any pair $I$, $J$ of adjacent and equal intervals, $\leb(f(I)) \leqslant K \leb(f(J))$ and vice versa.
Suppose that $P$ is a property that subsets of the circle might satisfy.
We say that a particular set has \emph{quasi-symmetric stability for $P$} if its image under any quasi-symmetry satisfies $P$.

\medskip
%Quasi-symmetries of the circle are rich from many perspectives.
It is of interest to find explicit subsets that display quasi-symmetric stability.
By definition, a quasi-symmetry is H\"{o}lder continuous; hence it preserves zero Hausdorff dimension.
However, there are quasi-symmetries that promote a (Lebesgue) measure zero set to full measure. 
Specifically, Tukia defines a quasi-symmetry and a set with Hausdorff dimension less than one such that its image by the quasi-symmetry has its complement with Hausdorff dimension less than one. 
See \cite[Main theorem]{Tuk}.
By contrast, quasi-symmetries in higher dimensions are quasi-conformal and hence absolutely continuous. 

\medskip
With the circle being the boundary at infinity for the hyperbolic plane, singular measures on the circle (and measure zero sets that correspond to these) arise from various contexts in hyperbolic geometry. 
A quasi-symmetry of the circle extends to a quasi-isometry of the hyperbolic plane. 
Conversely, quasi-isometries of the hyperbolic plane give rise naturally to quasi-symmetries of the circle.
So it is natural to consider the behaviour of such measures and sets under quasi-symmetries. 
In particular, finite area complete hyperbolic surfaces/orbifolds with cusps have been used to give explicit measure zero sets that are quasi-symmetrically stable for various properties (such as measure zero, porosity, etc.).
In \Cref{s:stability}, we describe some of these sets and survey known results about them.
In this note, we expand the list by considering singular stationary measures arising from random walks on \emph{non-uniform lattices} in $SL(2,\RR) = \mathrm{Isom}(\HH^2)$ (that is, fundamental groups of finite-area complete hyperbolic surfaces/orbifolds with cusps). 
 
 \medskip
A random walk on a lattice $\Gamma$ in $SL(2,\RR) = \mathrm{Isom}(\HH^2)$ is non-elementary if the semigroup generated by its support contains a pair of independent hyperbolic isometries (with distinct stable/unstable fixed points on the circle at infinity).
Projected in to $\HH^2$ by the group action, a typical sample path converges to a point on $S^1 = \bdy_\infty \HH^2$.
The convergence defines a stationary measure on $S^1 = \bdy_\infty \HH^2$ giving the distribution of sample path limit points.

\medskip
A lattice $\Gamma$ is non-uniform when the quotient $X = \HH^2/ \Gamma$ is finite area but non-compact and so has cusps.
It is well known that stationary measures from a large class of random walks (for example, with finite word metric first moment) on a non-uniform lattice are singular.
See \cite{Gui-LeJ}, \cite{Der-Kle-Nav}, \cite{Bla-Hai-Mat}, \cite{Gad-Mah-Tio} for different approaches to understanding the singularity.
A lattice is uniform if the quotient is a closed orbifold. 
A long standing conjecture of Guivarc'h--Kaimanovich--Ledrappier states that stationary measures from finitely supported random walks on a uniform lattice are also singular.
See \cite{Der-Kle-Nav}.
Here, we restrict to non-uniform lattices.

\medskip
For a non-uniform lattice $\Gamma$, we fix cusp neighbourhoods in $X = \HH^2 / \Gamma$. 
The lifts of the cusp neighbourhoods give a horoball packing in $\HH^2$. 
We fix a base-point $x$ in $\HH^2$. 
Given a horoball $H$ and a hyperbolic ray $\gamma$ from $x$ that intersects $H$, we define the excursion of $\gamma$ in $H$ to be the distance (in the path metric on $\bdy H$) between the entry and exit points. 
Let $r \geqslant 0$ be a constant. 
We define the $r$-shadow of $H$ to be the interval at infinity given by hyperbolic rays from $x$ that have an excursion of size at least $r$ in $H$.  
We say that a random walk on $\Gamma$ has exponential decay for excursions if for any horoball $H$ and any sample path converging in to the shadow of $H$, the probability that it converges into the $r$-shadow decays exponentially in $r$.
The decay holds for finitely supported non-elementary random walks on $\Gamma$.
It is expected to be true for a larger class of random walks (for example with finite word metric first moment), but this is as yet unproven. 
We discuss the more general context for exponential decay in \Cref{s:exponential-decay} and sketch a proof for finitely supported random walks. 

\medskip
Our main result below settles a conjecture of McMullen. 

\begin{theorem}
\label{t:strong-singularity}

Suppose that $\Gamma$ is a non-uniform lattice in $SL(2,\RR)$. 
Suppose that $\mu$ is a non-elementary probability distribution on $\Gamma$.
Let $\nu$ be the stationary measure on $S^1$ arising from the $\mu$-random walk.
Suppose that $\nu$ has exponential decay for excursions. 
Then the pushforward $f^* \nu$ by any quasi-symmetry $f: S^1 \to S^1$ is singular with respect to the Lebesgue measure on $S^1$.
\end{theorem}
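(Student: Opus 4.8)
\emph{Strategy.}
The plan is to exploit a dichotomy that no quasi-symmetry can repair. For a horoball $H$ of the packing write $I_H$ for its shadow from the base point $x$ and $I_H(r)\subseteq I_H$ for its $r$-shadow. On the one hand, the hypothesis supplies constants $C,\kappa>0$, uniform over $H$ (uniformity coming from stationarity and the finite number of cusps), with $\nu(I_H(r))\leq Ce^{-\kappa r}\nu(I_H)$; since $f$ is a homeomorphism of $S^1$ one has $f^*\nu(f(I_H(r)))=\nu(I_H(r))$ and $f^*\nu(f(I_H))=\nu(I_H)$, so $f^*\nu$ exhibits the \emph{same exponential decay} along the pushed-forward nested intervals $f(I_H)\supseteq f(I_H(r))$. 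On the other hand --- and this is the point --- Lebesgue measure decays only \emph{polynomially} along $I_H\supseteq I_H(r)$, and a $K$-quasi-symmetry, being bi-H\"older, can degrade a polynomial rate only to another polynomial rate. So I would set up the two estimates, then separate the exponential and polynomial behaviours by a pair of Borel-Cantelli arguments.

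\emph{Geometric and quasi-symmetric inputs.}
First I would record what I need from hyperbolic geometry. A direct computation in $\HH^2$ (upper half-plane, a horoball tangent at $0$, base point the centre of the attached disc model) shows that $I_H(r)$ is, up to bounded factors, the central sub-interval of $I_H$ of relative length $\asymp1/r$, so $\leb(I_H(r))\asymp\tfrac1r\leb(I_H)$. From the bounded geometry of horoball packings of non-uniform lattices, the ``level-$k$'' horoballs (Euclidean diameter $\asymp2^{-k}$) have shadows of length $\asymp2^{-k}$ covering $S^1$ with multiplicity $\leq N_0$, so $\sum_{k(H)=k}\nu(I_H)\leq N_0$ and $\sum_{k(H)=k}\leb(I_H)\asymp1$. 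For the quasi-symmetric side I would iterate the elementary fact that the image of the middle half of an interval has relative Lebesgue measure at least $1/(K(K+1))$ to get
\[
\leb\big(f(I_H(r))\big)\ \geq\ c\,r^{-M}\,\leb\big(f(I_H)\big),\qquad M:=\log_2(K(K+1)),
\]
for all $H$ and $r\geq2$, with $c,M$ depending only on $K$ (note $M>1$ once $K>1$); and, $f$ and $f^{-1}$ being H\"older, an interval $W$ with $\leb(W)\gtrsim2^{-k/\lambda}$ ($\lambda=\lambda(K)\geq1$) has $\leb(f^{-1}(W))\gtrsim2^{-k}$.

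\emph{The two Borel-Cantelli steps.}
I would fix $C_0$ with $\kappa C_0>1$ and set $r(k):=C_0\log k$ (doubly logarithmic in the scale), $A_H:=f(I_H(r(k)))$ for $H$ of level $k$, $\Lambda_k:=\bigcup_{k(H)=k}A_H$, $\Lambda:=\limsup_k\Lambda_k$. Grouping by level,
\[
\sum_k f^*\nu(\Lambda_k)\ \leq\ C\sum_k e^{-\kappa r(k)}\!\!\sum_{k(H)=k}\!\!\nu(I_H)\ \leq\ CN_0\sum_k k^{-\kappa C_0}\ <\ \infty,
\]
so the (elementary) first Borel-Cantelli lemma gives $f^*\nu(\Lambda)=0$, i.e.\ $f^*\nu(\Lambda^c)=1$. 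It would then remain only to show $\leb(\Lambda)=1$, for then $\Lambda^c$ has full $f^*\nu$-measure and zero Lebesgue measure, giving $f^*\nu\perp\leb$. For that I would prove a \emph{local ubiquity} estimate: for any interval $W$ with $\leb(W)\gtrsim2^{-k/\lambda}$, a Vitali-type selection of a disjoint, evenly spread subfamily of level-$k$ shadows inside $f^{-1}(W)$, fed through the distortion bound, gives
\[
\leb(W\cap\Lambda_k)\ \geq\ c'(C_0\log k)^{-M}\leb(W).
\]
Since $(\log k)^{-M}\geq k^{-1}$ eventually, $\sum_k(\log k)^{-M}=\infty$ for \emph{every} finite $M$, and a standard Cantor-set / mass-transference construction then forces $\leb(\bigcap_{k\geq N}\Lambda_k^c)=0$ for all $N$, i.e.\ $\leb(\Lambda)=1$.

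\emph{The main obstacle.}
The hard part is the last step, $\leb(\Lambda)=1$. One cannot get it by pulling back the classical divergence statement for $f=\mathrm{id}$ (a Borel-Bernstein / logarithm-law result for cusp excursions), since a quasi-symmetry can carry a Lebesgue-null set onto one of full measure (Tukia); the divergence Borel-Cantelli must be re-established directly for the pushed-forward targets $A_H$. This works precisely because local ubiquity is a ``polynomial'' feature that the bi-H\"older map $f$ preserves --- ``fill a $\gtrsim1/r(k)$ fraction of every interval down to scale $\asymp2^{-k}$'' becomes ``fill a $\gtrsim r(k)^{-M}$ fraction down to scale $\asymp2^{-k/\lambda}$'' --- and $\sum_k(\log k)^{-M}$ diverges for all $M$. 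The genuinely delicate point is bookkeeping inside the Cantor construction: one must refine each surviving interval at the \emph{first} admissible level, and only a bounded amount beyond it, so that the refinement levels along any branch grow merely linearly; then the successive removal densities behave like $(\log(\text{stage}))^{-M}$ rather than $(\text{stage})^{-M}$, and $\prod(1-(\log(\text{stage}))^{-M})=0$ despite $M>1$. I expect this to be the only step requiring real care; the rest is the geometry of the packing plus elementary quasi-symmetry estimates.
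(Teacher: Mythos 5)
Your convergence half is sound and parallels the paper's (exponential decay plus the easy Borel--Cantelli kills the deep excursions $f^*\nu$-a.s.), but the step you yourself flag as delicate --- $\leb(\Lambda)=1$ --- is where the argument genuinely breaks, and in two places. First, the ``local ubiquity'' estimate $\leb(W\cap\Lambda_k)\geq c'(C_0\log k)^{-M}\leb(W)$ does not follow from a Vitali selection at the single scale $k$: Sullivan's counting only gives that level-$k$ shadows cover a \emph{definite fraction} of $f^{-1}(W)$ in Lebesgue measure, never all of it, and a quasi-symmetry does not preserve ``definite fraction'' for a disjoint union of intervals --- that is exactly the Tukia phenomenon you cite. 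Your distortion bound $\leb(f(I_H(r)))\geq c\,r^{-M}\leb(f(I_H))$ controls one nested pair, but gives no lower bound on $\sum_H\leb(f(I_H))$ relative to $\leb(W)$; to make the selected family carry a definite image fraction you must pack $f^{-1}(W)$ by shadows up to full measure, which forces you to mix infinitely many scales (this is what \Cref{l:horoball-shadows-fill-interval} does), and even then the image of the null complement can have positive Lebesgue measure, so a lower bound in the image is still not automatic.

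Second, even granting a usable ubiquity statement, the bookkeeping you propose (``refine at the first admissible level so the levels grow linearly along branches'') cannot be arranged: since $f$ and $f^{-1}$ are only H\"older with exponent $1/\lambda$, a surviving piece produced while working at scale $k$ (e.g.\ a side piece of some $f(I_H)$ with $H$ of level $k$) can have length as small as roughly $2^{-\lambda k}$, so its first admissible level is roughly $\lambda^2 k$; along a branch the usable scales grow geometrically, $k_j\gtrsim\lambda^{2j}$, hence the removal densities are $(\log k_j)^{-M}\asymp j^{-M}$, which is \emph{summable} because $M>1$ for every $K>1$, and $\prod_j(1-\epsilon_{k_j})$ stays positive --- the Cantor construction does not yield $\leb(\Lambda)=1$. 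The root cause is indexing the excursion threshold by Euclidean scale. The paper instead builds a generation-indexed hierarchy $\calH^{(n)}$ (iterating Sullivan's estimate so that each parent shadow is filled up to full measure by children of mixed sizes), takes holes of relative Lebesgue size $\asymp 1/\log n$ with $n$ the \emph{generation}, and then invokes Ahlfors--Beurling together with Wu's theorem (\Cref{t:Wu}) for doubling measures: the exponent $\kappa$ in the hypothesis $\sum_n a_n^{\kappa}=\infty$ absorbs precisely the polynomial distortion you are trying to handle by hand, and $\sum_n(1/\log n)^{\kappa}$ diverges for every $\kappa$ because $n$ counts generations, not scales. Either quote Wu's theorem as the paper does, or re-index your construction by generation; as written, the final step fails.
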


Douady--Earle show that a quasi-symmetry $f : S^1 \to S^1$ always extends to a bijective quasi-isometry $f$ of $\HH^2$. 
See \cite[Theorem 2]{Dou-Ear} and also \cite[Theorem 3.1]{Ibr} for a strengthening of it.
The pushforward measure $f^* \nu$ is then the stationary measure arising from the same random wlak on the group of quasi-isometries resulting from conjugating $\Gamma$ by $f$.
\Cref{t:strong-singularity} is then singularity for stationary measures arising from such groups of quasi-isometries.

\medskip
Let $S$ be a surface homeomorphic to $\HH^2/ \Gamma$. 
We may view the quotient $\HH^2/ \Gamma$ as defining a marked conformal structure on $S$ thus giving us a point in the corresponding \teichmuller space $\calT(S)$ of $S$.
When $f$ is $\Gamma$-equivariant then the conjugate $f^{-1} \Gamma f$ is again a non-uniform lattice in $SL(2,\RR)$ (isomorphic to $\Gamma$). 
Thus, it can be thought of as a different point of $\calT(S)$. 
By restricting to $\Gamma$-equivariant quasi-symmetries, \Cref{t:strong-singularity} implies the (already known) singularity of stationary measures given by all of $\calT(S)$ from singularity at a single point.

\medskip
For any finitely supported non-elementary random walk on a uniform lattice $\Gamma$ in $SL(2,\RR)$, random walk drift in the hyperbolic plane is a proper function on the \teichmuller space of the surface that is homeomorphic to $\HH^2 / \Gamma$. 
See \cite[Theorem A]{Aze-Gad-Gou-Hat-Les-Uya}.
It follows that for every such random walk on the abstract group there is a uniform lattice in $SL(2,\RR)$ such that the stationary measure that arises is singular.
If \Cref{t:strong-singularity} was true for uniform lattices then we would deduce the singularity conjectured by Guivarc'h--Kaimanovich--Ledrappier.
For reasons outlined in \Cref{s:dimensions}, we find it hard to speculate whether  \Cref{t:strong-singularity} holds for uniform lattices and if it does hold whether proving it is as hard as proving singularity itself.

\subsection{Quasi-symmetric stability:} 
\label{s:stability}

We list some known examples of quasi-symmetric stability.
\begin{itemize}
\item Because quasi-symmetries are H\"{o}lder continuous, any set with zero Hausdorff dimension remains so under any quasi-symmetry.
\item Any set that is a countable union of uniformly porous sets remains so under any quasi-symmetry.
See \cite[Theorem 4.2]{Vai}.
\item Any set that is strongly winning or absolutely winning remains so under any quasi-symmetry. 
See \cite[Theorem 2.2]{McM}.
\end{itemize}

\subsection{Diophantine sets}
Some sets arising from geometric considerations have been analysed extensively for quasi-symmetric stability for properties such as above.
The most known example is the set $D$ of Diophantine directions on a finite area complete hyperbolic surface $X$ with cusps. 
$D$ is the set of directions of hyperbolic rays from a base-point that have bounded excursions in the cusp neighbourhoods of $X$.
By ergodicity of the hyperbolic geodesic flow, it follows that $D$ has measure zero.
Using the concepts of games, McMullen shows that $D$ is an absolute winning set. 
McMullen also showed that any absolute winning set remains so under any quasi-symmetry.
See \cite[Theorem 1.3 and Theorem 2.2]{McM}. 
It is known that winning sets have maximal Hausdorff dimension; so $D$ has Hausdorff dimension one.
McMullen also proves that $D$ is countable union of uniformly porous sets.
See \cite[Proposition 1.5]{McM}.

\medskip
Porosity will be the key concept in our proof of \Cref{t:strong-singularity}.
So we give a proof sketch of the porosity result for $D$.
For a positive integer $m \geqslant 2$, we define $D_m \subset D$ consisting of hyperbolic rays whose excursions are all bounded above by $m$.
Given a horoball $H$, the $m$-shadow of $H$ is a subinterval of the shadow of $H$ with relative size (in the Lebesgue measure) $\approx 1/m$. 
The set $D_m$ avoids all these sub-intervals.
Using an estimate by Sullivan \cite{Sul} for the number of horoballs of a certain size in an interval, one consructs a horoball heirarchy where at each level we have that
\begin{itemize}
\item the horoball shadows have disjoint interiors and cover a set of full measure; and
\item for each horoball of the previous level, the shadows of the current level are contained in the $m$-shadow or in one of the components (in the horoball shadow) of the complement of the $m$-shadow. 
\end{itemize}
This shows that the set $D_m$ is $1/m$-porous.
It follows that $D$ is a countable union of porous sets, each uniformly porous.

\subsection{Stationary measures from non-uniform lattices}
For the stationary measures, a typical ray has unbounded excursions and a countable exhaustion by uniformly porous sets is not possible.
Using exponential decay of horoball excursions, we build a horoball heirarchy $\calH^{(n)}$ for which there is a constant $c> 0$ (that depends only on the random walk) such for a typical ray (for the stationary measure) its excursion in a horoball in $\calH^{(n)}$ is eventually (that is, for $n$ large enough depending on the ray) bounded above by $c \log n$. 
See \Cref{p:excursion-upper-bounds}.
This implies that eventually a typical ray misses sub-intervals of horoball shadows at all levels of the heirarchy.
The relative sizes of the sub-intervals in the corresponding shadows approach zero as the level goes to infinity but quantitatively this happens slowly.
This brings us to the notion of (non-uniform) porosity and we show that the support of the stationary measure is contained in a countable union of such porous sets.

\medskip
By a classical theorem of Ahlfors--Beurling (see \cite[Theorem 1]{Ahl-Beu}), any quasi-symmetry of the circle is given by integration with respect to a doubling measure.
By a theorem of Wu (see \cite[Theorem 1]{Wu}), non-uniform porous sets with a high enough porosity (slow enough decay of the relative sizes of complementary subintervals/holes) are measure zero for any doubling measure, hence quasi-symmetrically stable.
Our excursion upper bounds imply the slow enough decay of the relative sizes of avoided subintervals required in Wu's theorem. 
Our main theorem, namely \Cref{t:strong-singularity}, is then a direct consequence of Wu's theorem. 

\medskip
Singularity of stationary measures is also known in the following analogous contexts.
\begin{itemize}
\item Non-uniform lattices in the isometry group of $\HH^n$ for $n > 2$ (see \cite{Ran-Tio});
\item Mapping class groups of finite type surfaces with negative Euler characteristic (see \cite{Gad}, \cite{Gad-Mah-Tio2}).
\end{itemize}
The random walk boundaries in the above contexts are the spheres at infinity $S^{n-1} = \bdy_\infty \HH^n$ in the first case, and the Thurston boundaries of the corresponding \teichmuller spaces (see \cite{Kai-Mas}). 
The dimensions of these boundaries are at least two. 
Since quasi-symmetries (that is, quasi-conformal maps) in higher dimensions are absolutely continuous, quasi-symmetric stability holds directly.
However, the question of porosity is still of interest. 
%Our excursion upper bounds should hold in the above contexts.
We expect that the stationary measure has support contained in a countable union of highly porous (but not uniformly porous) subsets of the boundaries.

\subsection{Dimensions of stationary measures:}
\label{s:dimensions}

Our countable collection of sets that have high enough porosity is constructed using excursion upper bounds. 
In particular, (up to excising a countable subset of $S^1$) their union contains the set $D$ of Diophantine directions. 
Thus, the Hausdorff dimension of their union is one.

\medskip
On the other hand, the dimension of a singular stationary measure is expected to be strictly less than one thus giving us a stronger form of singularity.
See \cite[Conjecture 1.21]{Der-Kle-Nav} and in a more general form in \cite{Kai-LeP}.
For uniform lattices, finite first moment for a word metric is the same as finite first moment for the hyperbolic metric. 
So the dimension gap is conjectured for finitely supported random walks on uniform lattices, the general picture for diffused random walks being unclear.

\medskip
For non-uniform lattices, finite first moment in the word metric implies finite first moment for the hyperbolic metric but the converse is not true because the parabolic subgroups (corresponding to the cusps) are exponentially distorted.
Hence, the dimension gap may be conjectured for more general random walks that have finite word metric first moment. 
Our countable union of porous sets while serving the purpose for \Cref{t:strong-singularity} is nevertheless too large to give information regarding the dimension of the stationary measure.

\medskip
Despite the dimension conjecture for finitely supported random walks on uniform lattices, we think it to be likely that the stationary measure exhibits no porosity.
This should rule out our approach in proving \Cref{t:strong-singularity} for uniform lattices.
It is also possible that there exists a non-equivariant quasi-symmetry that does straighten the stationary measure in this case.

\subsection{Bounds on maximal excursions of typical geodesics:} 
\label{s:maximal-excursions}

Similar upper bounds hold when all cusp excursions of a typical hyperbolic ray for the stationary measure are considered.
Specifically, there is a constant $c > 0$ that depends only on the random walk such that along a typical ray the $n^{\text{th}}$ excursion $E_n$ is eventually (that is, for all $n$ large enough depending on the ray) at most $c \log n$.
This result follows directly from the exponential decay of excursions and the easy direction of the Borel--Cantelli lemma.
In fact, the upper bounds also hold in analogous contexts such as for non-uniform lattices in $\text{Isom}(\HH^n)$ for $n > 2$ and mapping class groups of finite type surfaces with negative Euler characteristic.

\medskip
Our construction of the horoball hierarchy is more involved because it is built to track porosity.
So it is not $\Gamma$-equivariant.
Excursions in these horoballs are only a subset of all cusp excursions.
Similarity of the bounds suggest that the set of excursions in the horoballs of the heirarchy has positive density among all excursions.

\medskip
The derivation of a logarithmic lower bound for the maximal excursion over all excursions up to time $T$ is harder and we do not attempt it here.
It should require the following two properties.
\begin{itemize}
\item A lower bound for the tail distribution (in the stationary measure) of an excursion.
\item A quasi-independence of excursions.
\end{itemize}
We expect these properties to be true when the random walk generates $\Gamma$ as a semi-group and is finitely supported (more generally, has a finite first word metric moment).
The arguments are likely to be quite technical. 
By knowing logarithmic lower bounds we would describe more precisely a full measure set for the stationary measure (in particular, it would not contain $D$). 
It might be sufficient to prove the dimension gap (referred to in \Cref{s:dimensions}).

\medskip
For the Lebesgue measure, the excursions are much larger. 
In particular, there is a constant $c' > 0$ such that for any $c> 1$ the largest excursion $E_{\max}(n)$ along any Lebesgue-typical ray is eventually (that is for $n$ large enough depending on the ray) at most $c' n (\log n)^c$. 
See \cite[Lemma 5.5]{Gad2}.
Lower bounds for $E_{\max}(n)$ are also known; for example Sullivan's $\log$-law says that the $\lim (\log E_{\max}(n)/ \log n)$ is $1/\log 2$. 
See \cite{Sul}.

\subsection{Acknowledgements} We thank C. McMullen for his comments on a preliminary draft of the paper. 

%%%%%%%%%%%%%%%%%%%%%%%%%%%%%%%%%%%%%%%%%%%%%%%%%%
\section{Horoball heirarchies}
\label{s:horoball-heirarchies}

Let $\Gamma$ be a non-uniform lattice in $SL(2, \RR)$.
Let $N_1, \cdots, N_k$ be (pairwise disjoint) cusp neighbourhoods in the finite area hyperbolic surface $X = \HH / \Gamma$. 
Lifting these neighbourhoods to $\HH$, let $\calH$ be the resulting horoball collection.
We call the complement $\HH - \cup_{H \in \calH} H$ the \emph{thick part} $\calT$.
%With the induced path metric, the thick part is quasi-isometric to the virtually free group $\Gamma$ (with a word metric with respect to a finite symmetric generating set) by the \u{S}varc-Milnor lemma.
We fix a base-point $x$ in $\calT$. 

\medskip
Let $p, p'$ be a pair of distinct points in $S^1 = \bdy_\infty \HH$. 
They partition $S^1$ into a pair of intervals with disjoint interiors.
Using the orientation on $S^1$ (compatible with orientation on $\HH$), we define $[p,p'] \subset S^1$ to be the interval that is positively oriented as we traverse from $p$ to $p'$.

\medskip
Let $H$ be any horoball in $\calH$ with $p_\infty(H)$ in $S^1$ its point at infinity. 
Let $\gamma^-(H)$ and $\gamma^+(H)$ be hyperbolic rays from $x$ with endpoints $p^-(H)$ and $p^+(H)$ in $S^1$ such that  
\begin{itemize} 
\item $\gamma^-(H)$ and $\gamma^+(H)$ and tangent to $H$; and
\item $p_\infty(H)$ is contained in $[p^-(H), p^+(H)]$. 
\end{itemize}
We call the interval $\sh_0(H) = [p^-(H), p^+(H)]$ the \emph{shadow at infinity} of $H$.

\medskip
Let $\gamma$ be a geodesic ray from $x$ that converges to a point in $\sh_0(H)$.
Let $x_p$ and $x'_p$ be the points of intersection of $\gamma$ with $\bdy H$. 
We define the \emph{excursion} $E(\gamma, H)$ to be the distance along $\bdy H$  between $x_p$ and $x'_p$.

\medskip
To define shadows more generally, suppose that $r \geqslant 0$.
Let $\gamma^-_r$ and $\gamma^+_r$ be geodesic rays from $x$ such that 
\begin{itemize}
\item $\gamma^-_r$ converges to a point $p^-_r$ in $[p^-(H), p_\infty(H)]$;
\item $\gamma^+_r$ converges to a point $p^+_r$ in $[p_\infty(H), p^+(H)]$; and
\item $E(\gamma^-_r, H) = r$ and $E(\gamma^+_r, H) = r$.
\end{itemize}
We define the shadow $\sh_r(H)$ to be the interval $[p^-_r , p^+_r]$. 
By definition, if $r < s$ then $\sh_s (H)$ is \emph{nested} in $\sh_r(H)$ that is, the closure of $\sh_s(H)$ is contained in the interior of $\sh_r (H)$.
Let $\leb$ denote the Lebesgue measure on $S^1$.
It follows from the nesting (and basic hyperbolic geometry) that there is a constant $c> 0$ such that for any horoball $H$ in $\calH$
\begin{equation}
\label{e:proportion}
\leb(\sh_r(H)) \geqslant \frac{c}{r} \leb(\sh_0(H))
\end{equation}
See \cite[Section 3]{Gad2}.

\medskip
We organise horoballs by the (Euclidean) sizes of their shadows as follows.
Suppose that $0 < \rho < 1$ is a positive constant.
We say that a horoball has $\rho$ size $n$ if $\rho^{n+1} < \leb(\sh_0 (H) )\leqslant \rho^n$.
Suppose that $I$ is an interval in $S^1$.
We define $\calH_n(I)$ to be the collection of those horoballs $H$ in $\calH$ such that 
\begin{itemize}
\item $\sh_0(H)$ is contained in $I$; and
\item the $\rho$-size of $H$ is $n$.
\end{itemize}

\medskip
We recall \cite[Proposition 4]{Sul}. 
The proof uses the mixing of the hyperbolic geodesic flow on $T^1 \HH^2 / \Gamma$. 
The proposition is not stated as precisely in \cite{Sul} as it is stated here.
However, our statement is implicit in the proof of \cite[Proposition 4]{Sul} requiring no modification other than tracking explicitly how the counting depends on the compact subset of the boundary.

\begin{proposition}
\label{p:horoballs-of-a-certain-size}
Suppose that $\Gamma$ is a non-uniform lattice in $SL(2,\RR)$. There are positive constants $C$ and $\rho < 1$ such that for any interval $I$ in $S^1$ there is a positive integer $n_I$ such that 
\[
| \calH_n(I) | \geqslant C \frac{\leb(I)}{\rho^n}
\]
for all $n \geqslant n_I$.
\end{proposition}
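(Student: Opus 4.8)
The plan is to recognise the statement as Sullivan's horoball count, \cite[Proposition 4]{Sul}, with the dependence of the constants on the arc $I$ tracked explicitly; that refinement is the only real work, everything else being the standard dictionary between horoball packings, continued fraction--type counting and the geodesic flow.

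First I would normalise coordinates so that the disk model of $\HH$ has the basepoint $x$ at its centre. Then the shadow size $\leb(\sh_0(H))$ of a horoball $H \in \calH$ is a fixed, strictly increasing function $\Phi$ of its Euclidean diameter $\diam(H)$, with $\Phi(\delta) \sim \delta$ as $\delta \to 0$. Consequently ``$H$ has $\rho$-size $n$'' is exactly the condition $\diam(H) \in \bigl( \Phi^{-1}(\rho^{n+1}), \Phi^{-1}(\rho^{n}) \bigr]$, a band whose endpoints are each $\asymp \rho^{n}$ and whose ratio tends to $\rho$. Moreover, as soon as $\rho^{n} < \tfrac14 \leb(I)$ --- a constraint of the form $n \geqslant n_I$ --- every such $H$ whose point at infinity $p_\infty(H)$ lies in the middle half $\tfrac12 I$ of $I$ automatically satisfies $\sh_0(H) \subseteq I$. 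So it suffices to bound below the number of $H \in \calH$ with $p_\infty(H) \in \tfrac12 I$ and $\diam(H)$ in the band above, and for this I use only the $\Gamma$-translates of a single fixed cusp neighbourhood $N_i$.

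Next I would invoke Sullivan's count. Write $N(t;J)$ for the number of translates $\gamma N_i$ ($\gamma \in \Gamma$) with $p_\infty(\gamma N_i) \in J$ and $\diam(\gamma N_i) \geqslant t$. Conjugating so that $N_i$ is the horoball at $\infty$ in the upper half-plane model with parabolic stabiliser $\langle z \mapsto z + 1 \rangle$, one has that $\gamma N_i$ is tangent at $a/c$ with diameter $\asymp c^{-2}$; the mixing of the geodesic flow on $T^{1}(\HH/\Gamma)$, which is the engine of \cite[Proposition 4]{Sul}, equidistributes the points $a/c$ modulo $1$ and yields an asymptotic $N(t;J) = A\,\leb(J)\, t^{-1} + o(t^{-1})$ as $t \to 0$, with $A > 0$ and with the error term uniform in $J$. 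Subtracting this asymptotic at the two endpoints of the band from the previous step, and using that their ratio tends to $\rho < 1$, gives that the count in question equals $A' \leb(I) \rho^{-n} + o(\rho^{-n})$ for an explicit constant $A' > 0$; once $n$ is large enough --- depending on $\leb(I)$ --- that the error is at most half the main term, this count exceeds $\tfrac12 A' \leb(I) \rho^{-n}$. Taking $C = \tfrac12 A'$ and letting $n_I$ be the largest of the finitely many thresholds invoked, this is the Proposition.

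The substantive point, and the main obstacle, is the uniformity in $J$ of the error in $N(t;J) = A \leb(J) t^{-1} + o(t^{-1})$: this error comes from a single mixing (spectral gap) estimate on the \emph{fixed} finite-volume manifold $T^{1}(\HH/\Gamma)$, and restricting attention to geodesics aimed into $J$ merely rescales the main term by the visual (Lebesgue) mass of $J$ without worsening that estimate. This is exactly what \cite[Proposition 4]{Sul} leaves implicit, and it is also what forces the threshold $n_I$ to depend on $\leb(I)$: when $\leb(I)$ is small, the main term $\leb(I) \rho^{-n}$ overtakes the position-independent error only once $\rho^{-n}$ is comparably large.
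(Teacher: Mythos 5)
Your proposal is correct and follows essentially the same route as the paper: the paper's proof of this proposition is simply an appeal to Sullivan's mixing-based horoball count \cite[Proposition 4]{Sul}, with the only new content being to track how the count depends on the interval $I$ --- exactly the uniformity-in-$J$ point you isolate as the substantive step. Your additional reductions (restricting to the middle half of $I$, converting $\rho$-size into a diameter band via $\Phi$, counting translates of a single cusp neighbourhood) are a faithful expansion of what the paper leaves implicit.
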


Suppose that $I$ is an interval in $S^1$. 
We define the subcollection $\calH'_n(I) \subset \calH_n(I)$ by requiring that for any pair of horoballs $H_1$ and $H_2'$ in $\calH'_n(I)$ the shadows $\sh_0(H_1)$ and $\sh_0(H_2)$ have disjoint interiors.
We use \Cref{p:horoballs-of-a-certain-size} to deduce:

\begin{lemma}
\label{l:defininte-proportion}
Suppose that $\Gamma$ is a non-uniform lattice in $SL(2,\RR)$. There is a positive constant $0 < \delta < 1$ such that for any interval $I$ in $S^1$ there is a positive integer $n$ such that the union $U = \cup_{H \in \calH'_n(I)} \sh_0(H)$ satisfies 
\[
\leb(U) > \delta \leb(I)
\]
\end{lemma}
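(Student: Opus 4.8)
The plan is to feed the counting estimate of \Cref{p:horoballs-of-a-certain-size} into a Vitali-type selection argument, exploiting two features of shadows of a fixed $\rho$-size: they are intervals of comparable length, and they overlap with uniformly bounded multiplicity. So I would fix an interval $I\subset S^1$ and a positive integer $n$ which is at least $n_I$ and also large enough (depending on $I$) that $C\leb(I)/\rho^n\geq 1$ and the geometric estimates below apply; note that every $H\in\calH_n(I)$ then satisfies $\rho^{n+1}<\leb(\sh_0(H))\leq\rho^n$, so the shadows $\{\sh_0(H):H\in\calH_n(I)\}$ are intervals whose lengths differ pairwise by a factor at most $1/\rho$.

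The key preliminary step I would carry out is a bounded-multiplicity statement: there is a constant $M$, depending only on $\Gamma$ and the chosen cusp neighbourhoods, such that every point of $S^1$ lies in at most $M$ of these shadows. This follows from the disjointness of the horoballs. Distinct horoballs have distinct points at infinity (two horoballs based at the same point are nested), and basic hyperbolic geometry applied to disjoint horoballs (compare \cite[Section 3]{Gad2} and \cite{Sul}) provides a uniform constant $a>0$ with $|p_\infty(H_1)-p_\infty(H_2)|\geq a\rho^n$ for distinct $H_1,H_2\in\calH_n(I)$; since a shadow of $\calH_n(I)$ containing a point $\xi$ also contains its own point at infinity and has length at most $\rho^n$, at most $M:=\lceil 2/a\rceil$ such shadows can contain $\xi$. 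Because each $\sh_0(H)$ has length $>\rho^{n+1}$ while any shadow meeting it lies inside the $\rho^n$-neighbourhood of it (an interval of length at most $3\rho^n$), bounded multiplicity forces each $\sh_0(H)$ to meet at most $d:=\lceil 3M/\rho\rceil$ other shadows of $\calH_n(I)$; in other words the intersection graph on $\calH_n(I)$ has maximum degree at most $d$.

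Next I would take $\calH'_n(I)\subseteq\calH_n(I)$ to be a maximal subfamily whose shadows have pairwise disjoint interiors (an admissible choice of the collection in the statement, found greedily). Since in any graph of maximum degree $d$ every maximal independent set has at least $|V|/(d+1)$ vertices, $|\calH'_n(I)|\geq|\calH_n(I)|/(d+1)$. Combining this with \Cref{p:horoballs-of-a-certain-size}, with $\leb(\sh_0(H))>\rho^{n+1}$, and with the disjointness of the chosen shadows, one obtains
\[
\leb(U)=\sum_{H\in\calH'_n(I)}\leb(\sh_0(H))>|\calH'_n(I)|\,\rho^{n+1}\geq\frac{1}{d+1}\cdot C\,\frac{\leb(I)}{\rho^n}\cdot\rho^{n+1}=\frac{C\rho}{d+1}\,\leb(I).
\]
Since $U\subseteq I$ gives $\leb(U)\leq\leb(I)$ and $\leb(I)>0$, this strict inequality forces $C\rho/(d+1)<1$, so $\delta:=C\rho/(d+1)$ is a constant in $(0,1)$, independent of $I$, with $\leb(U)>\delta\leb(I)$, which is the assertion.

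I expect the bounded-multiplicity input to be the main obstacle; the remaining steps are routine manipulations with comparable-length intervals, but turning the cardinality bound of \Cref{p:horoballs-of-a-certain-size} into a lower bound for a Lebesgue measure genuinely requires controlling how much the shadows of a fixed $\rho$-size overlap. I would make the underlying separation estimate precise and uniform in $n$ and in $I$ — for instance by conjugating each cusp to $\infty$ in the upper half-plane model, where the relevant horoballs form a single parabolic orbit and two distinct disjoint ones tangent to $\RR$ have their tangency points separated by at least twice the geometric mean of their diameters (so by an amount comparable to $\rho^n$ in our situation) — and then transport this to shadows using the uniform comparability between $\leb(\sh_0(H))$ and the Euclidean diameter of $H$.
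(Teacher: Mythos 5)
Your proof is correct and follows essentially the same route as the paper: feed the counting estimate of \Cref{p:horoballs-of-a-certain-size} into a pruning argument that extracts a disjoint subfamily containing a definite fraction of the size-$n$ shadows, then sum the lower bound $\rho^{n+1}$ on each retained shadow to obtain a proportion $\delta$ of $\leb(I)$ independent of $I$. The only difference is one of detail: where the paper tersely asserts that each shadow can meet at most $1/\rho$ shadows of the same size, you justify the bounded overlap via the separation of tangency points of disjoint horoballs, which is a more careful treatment of that step (your ``twice the geometric mean of the diameters'' should be the geometric mean of the diameters, but the constant is immaterial).
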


\begin{proof}
By \Cref{p:horoballs-of-a-certain-size}, we may fix $n_1 = n_I$ and consider horoballs $H$ such that $\sh_0(H)$ is contained in $I$ and the $\rho$-size of $H$ is $n_1$.
By definition, each horoball shadow can intersect at most $1/\rho$ shadows of size $n$.
Thus, we may retain at least $C' \leb(I)/\rho^n$ horoballs (where $C' = c\rho$) to get the collection $\calH'_n(I)$.
It follows that for $U = \cup_{H \in \calH'_n(I)} \sh_0(H)$ we have 
\[
\leb(U) = \sum_{H \in \calH'_n(I)} \sh_0(H) \geqslant (|\calH'_n(I)| ) \cdot \rho^{n+1} = C \rho^2  \leb(I)
\]
The lemma follows by choosing $\delta < C \rho^2$. 
\end{proof}

We use \Cref{l:defininte-proportion} to deduce:
\begin{lemma}
\label{l:horoball-shadows-fill-interval}
Suppose that $I$ is an interval in $S^1$.
There exists a collection $\calH (I) \subset \calH$ of horoballs $H$ such that 
\begin{itemize}
\item for distinct horoballs $H$ and $H'$ in $\calH (I) $, the interiors of $\sh_0(H)$ and $\sh_0(H')$ are disjoint; and
\item the union $U  = \bigcup_{H \in \calH (I)} \sh_0(H)$ has full $\leb$-measure in $I$.
\end{itemize}
\end{lemma}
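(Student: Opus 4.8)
The plan is to apply \Cref{l:defininte-proportion} recursively: at each stage we cover a definite proportion of whatever is still uncovered, so the uncovered part shrinks geometrically and ends up $\leb$-null, while keeping all chosen shadows disjoint by always choosing new horoballs inside the gaps left by the previous stages.

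First I would set up the recursion. Put $\calI_0 = \{I\}$. Given a countable collection $\calI_j$ of pairwise disjoint subintervals of $I$ — the ``gaps'' left uncovered after stage $j$ — apply \Cref{l:defininte-proportion} to each $J \in \calI_j$. This produces an integer $n_J$ and a finite subcollection $\calH'_{n_J}(J) \subset \calH$ with pairwise disjoint shadow interiors, all of whose shadows lie in $J$, whose union $U_J = \bigcup_{H \in \calH'_{n_J}(J)} \sh_0(H)$ satisfies $\leb(U_J) > \delta\, \leb(J)$. Let $\calK_{j+1} = \bigcup_{J \in \calI_j} \calH'_{n_J}(J)$ be the horoballs collected at stage $j+1$, and let $\calI_{j+1}$ be the collection of connected components of $J \setminus U_J$ over all $J \in \calI_j$; these are again countably many pairwise disjoint intervals. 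Finally set $\calH(I) = \bigcup_{j \geqslant 1} \calK_j$.

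Next I would verify the two conclusions. For disjointness of shadow interiors: within a fixed stage, shadows coming from the same $J$ have disjoint interiors by \Cref{l:defininte-proportion}, while shadows coming from distinct $J, J' \in \calI_j$ are disjoint because $J$ and $J'$ are; across stages, every shadow of a horoball in $\calK_{j+1}$ lies inside some gap $J \in \calI_j$, and $J$ is disjoint from every shadow chosen at stages $1,\dots,j$. For the measure: write $m_j = \sum_{J \in \calI_j} \leb(J)$ for the total uncovered measure after stage $j$. Then $m_0 = \leb(I)$, and since each $J$ leaves uncovered measure $\leb(J) - \leb(U_J) < (1-\delta)\leb(J)$, we get $m_{j+1} < (1-\delta) m_j$, hence $m_j < (1-\delta)^j \leb(I) \to 0$. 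A point of $I$ lying in no shadow $\sh_0(H)$ with $H \in \calH(I)$ must lie in a gap at every stage, so the set of such points has measure at most $m_j$ for every $j$ and is therefore $\leb$-null; as each $\sh_0(H)$ for $H \in \calH(I)$ is contained in $I$, this says exactly that $U = \bigcup_{H \in \calH(I)} \sh_0(H)$ has full $\leb$-measure in $I$.

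I do not expect a genuine obstacle here; the real content is already in \Cref{l:defininte-proportion} (and hence in the Sullivan-type count \Cref{p:horoballs-of-a-certain-size}), and the present lemma is the geometric-series filling argument layered on top. The only care needed is bookkeeping: checking that each $\calI_j$ remains countable (it is, being assembled from countably many finite subdivisions), that only countably many invocations of \Cref{l:defininte-proportion} occur (so there is no set-theoretic subtlety), and noting that whether one takes $J \setminus U_J$ or $J \setminus \overline{U_J}$ as the next family of gaps changes the uncovered set only by the countable, hence $\leb$-null, set of shadow endpoints — and is in any case immaterial since we only assert disjointness of interiors.
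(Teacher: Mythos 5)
Your proof is correct and follows essentially the same route as the paper: iterate \Cref{l:defininte-proportion} on the gaps left uncovered, so that the uncovered measure decays geometrically by the factor $(1-\delta)$, while disjointness of shadow interiors is maintained because new shadows are always chosen inside previous gaps. Your extra bookkeeping (countability of the gap families, interiors versus closures) is harmless and only makes explicit what the paper leaves implicit.
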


\begin{proof}
By \Cref{l:defininte-proportion} applied to $I$, we can find a collection $\calH(I, 1) = \calH'_n(I)$ of horoballs all of $\rho$-size $n$ such that their shadows have disjoint interiors and their union $U(I,1)$ satisfies $\leb(U(I,1)) > \delta \leb(I)$.
We now consider the intervals that are the components of $I - U(I,1)$. 
These are finitely many and again by  \Cref{l:defininte-proportion} applied to each of these intervals, we can find a collection of horoballs for each interval such that their shadows have disjoint interiors and their union has proportion (for the Lebesgue measure) at least $\delta$ in the interval.
We may then define $\calH (I, 2)$ to be the collection of these horoballs and define $U(I,2)$ to be the union of their shadows. 
Note that $\leb(U(I, 2)) > \delta \leb(I - U(I,1))$.

\medskip
Iterating infinitely, we get a sequence horoball collections $\calH (I,k)$ and subsets $U(I,k)$ of $I$ such that
\begin{itemize}
\item $U(I, 0)$ is the empty set, 
\item $U(I,k) \subset I - \cup _{i \leqslant k-1} U(I,i) $; 
\item for $k \geqslant 1$ we have that $U(I,k)$ is the finite union of  shadows of horoballs in $\calH (I,k)$ with pairwise disjoint interiors, and
\item there is a constant $\delta > 0$ such that $\leb(U (I,k) ) > \delta \leb(I - \cup _{i \leqslant k-1} U (I, i) )$ for all $k \geqslant 1$. 
\end{itemize}
It follows that the infinite union $\cup_k  U (I, k)$ is a countable union of horoball shadows with pairwise disjoint interiors and it has full measure in $I$.
\end{proof}

Using \Cref{l:horoball-shadows-fill-interval}, we define the first sub-collection $\calH^{(1)}$ of horoballs in our hierarchy.

\begin{lemma}
\label{l:first-collection}
There exists a collection $\calH^{(1)} \subset \calH$ of horoballs $H$ such that 
\begin{itemize}
\item for distinct horoballs $H$ and $H'$ in $\calH^{(1)}$, the interiors of $\sh_0(H)$ and $\sh_0(H')$ are disjoint; and
\item the union $U^{(1)}  = \bigcup_{H \in \calH^{(1)}} \sh_0(H)$ has full $\leb$-measure in $S^1$.
\end{itemize}
\end{lemma}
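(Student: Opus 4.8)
The plan is to reduce the statement to \Cref{l:horoball-shadows-fill-interval}, which is the same assertion with $S^1$ replaced by an interval $I$. Since $S^1$ is not itself an interval in the sense fixed in this section, the first step is to present it as a union of two arcs: fix distinct points $q_1, q_2 \in S^1$ and set $I_1 = [q_1, q_2]$ and $I_2 = [q_2, q_1]$, so that $I_1 \cup I_2 = S^1$ while $I_1 \cap I_2 = \{q_1, q_2\}$ has $\leb$-measure zero.

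Next I would apply \Cref{l:horoball-shadows-fill-interval} to $I_1$ and to $I_2$ separately, obtaining collections $\calH(I_1)$ and $\calH(I_2)$. In each case the shadows have pairwise disjoint interiors, are contained in the corresponding interval (as is evident from the construction in the proof of \Cref{l:horoball-shadows-fill-interval}, where one only ever selects horoballs whose shadows lie in sub-intervals of the given interval), and their union has full $\leb$-measure inside that interval. I would then take $\calH^{(1)} = \calH(I_1) \cup \calH(I_2)$ and $U^{(1)} = \bigcup_{H \in \calH^{(1)}} \sh_0(H)$.

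Finally I would verify the two bullet points. For full measure: $U^{(1)}$ contains $I_1$ and $I_2$ each up to a $\leb$-null set, hence $S^1 \setminus U^{(1)}$ is $\leb$-null. For disjointness of interiors: within each $\calH(I_j)$ this is already given; and if $H \in \calH(I_1)$ and $H' \in \calH(I_2)$, then the interior of $\sh_0(H)$ lies in the interior of $I_1$ and that of $\sh_0(H')$ lies in the interior of $I_2$, and these two open arcs are disjoint. (This same observation shows the two subcollections are themselves disjoint, since a nondegenerate shadow cannot be contained in $I_1 \cap I_2 = \{q_1, q_2\}$, so there is no ambiguity in forming the union.)

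I do not expect a genuine obstacle here: the substantive content is entirely contained in the already-established \Cref{l:horoball-shadows-fill-interval}, and the only point requiring a little care is the passage from an interval to the whole circle, which is handled cleanly by the finite cover by two arcs rather than by attempting to invoke \Cref{l:horoball-shadows-fill-interval}, \Cref{l:defininte-proportion}, or \Cref{p:horoballs-of-a-certain-size} directly with $I = S^1$.
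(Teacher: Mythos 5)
Your proposal is correct and is essentially the paper's argument: the paper proves \Cref{l:first-collection} simply by applying \Cref{l:horoball-shadows-fill-interval} to $S^1$ itself. Your decomposition of $S^1$ into two arcs $I_1, I_2$ is just a careful way of handling the technicality that $S^1$ is not literally an interval in the paper's convention, and it changes nothing of substance.
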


\begin{proof}
The proof follows directly from \Cref{l:horoball-shadows-fill-interval} applied to $S^1$. 
\end{proof} 

\medskip
Let $\alpha > 0$ be a constant.
The horoball sub-collections of $\calH$ that we define below depend on $\alpha$.
We drop the dependence from our notation to keep it simple.

\medskip
We define the sequence $r_n = \alpha \log n$.
We now define the collection $\calH^{(n)}$ for $n \geqslant 2$ inductively. 
Let $H$ be a horoball in $\calH^{(n-1)}$. 
Let $\calK$ be the collection of horoballs $H'$ such that $\sh_0(H')$ is contained in either $\sh_{r_n}(H)$ or in the closure of one of the components of $\sh_0(H) - \sh_{r_n}(H)$. 
By \Cref{l:horoball-shadows-fill-interval}, we can find a collection $\calH^{(n)}_H$ of horoballs in $\calK$ such that 
\begin{itemize}
\item for distinct horoballs $K$ and $K'$ in $\calH^{(n)}_H$, the interiors of $\sh_0(K)$ and $\sh_0(K')$ are disjoint; and
\item the union $\bigcup_{K \in \calH^{(n)}_H} \sh_0(K)$ has full measure in $\sh_0(H)$. 
\end{itemize}
See \Cref{f:horoballs}.

\begin{figure}
\centering
\includegraphics[scale=0.5]{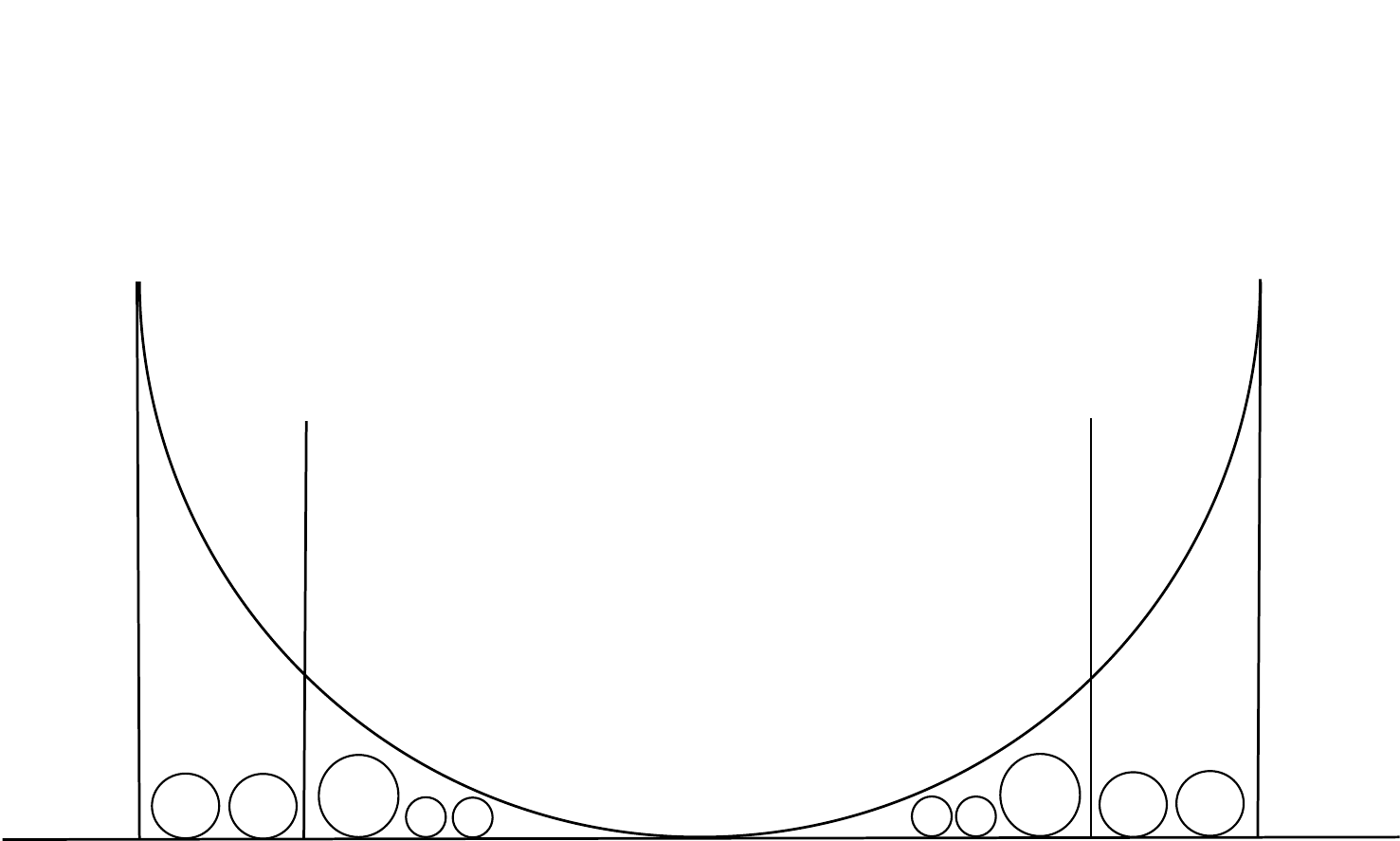}
\setlength{\unitlength}{1in}
\begin{picture}(0,0)(0,0)
\put(-2.55,-0.2){$p_\infty $}
\put(-4.55,-0.2){$p^-$}
\put(-0.65,-0.2){$p^+$}
\put(-4,-0.2){$p^-_r$}
\put(-1.25,-0.2){$p^+_r$}
\put(-4.55,2.2){$\gamma^-$}
\put(-4,1.8){$\gamma^-_r$}
\put(-1.25,1.8){$\gamma^+_r$}
\put(-0.65,2.2){$\gamma^+$}
\end{picture}
\vskip 10pt
\caption{Illustration of how the collection $\calH^{(n)}_H$ is constructed.} 
%Part of the loop $\gamma$ is depicted as a solid curve. 
%The dotted lines represent the boundaries of the polytopes. 
%Unlike the boxes $V_{k-1}$ and $V_{k+1}$, the box $V_k$ is not contained inside a polytope, so the diagonal flow must be applied to it. 
%The resulting segment $\delta_k$ is shown as dashed curve.}
\label{f:horoballs}
\end{figure}

We define $\calL^{(n)}_H$ to be the collection of those horoballs $H'$ in $\calH^{(n)}_H$ such that $\sh_0(H')$ is contained in $\sh_{r_n}(H)$.
We define $L^{(n)}_H$ to be the union $\bigcup_{H' \in \calL^{(n)}_H} \sh_0(H')$.
By \Cref{e:proportion},  
\[
\leb \left( L^{(n)}_H \right) \geqslant \frac{c}{r_n} \leb( \sh_0(H)) 
\]

\medskip
We define 
\[
\calH^{(n)} = \bigcup_{H \in \calH^{(n-1)}} \calH^{(n)}_H
\]
Note that the union $U^{(n)} = \bigcup_{H \in \calH^{(n)}} \sh(H) $ is full $\leb$-measure in $S^1$.

\medskip
We also define 
\[
\calL^{(n)} = \bigcup_{H \in \calH^{(n-1)}} \calL^{(n)}_H
\]
We define $L^{(n)}$ to be the union $\bigcup_{H \in \calL^{(n)}} \sh_0(H)$.
Since $U^{(n)}$ has full measure, by \Cref{e:proportion} we get 
\[
\leb \left( L^{(n)} \right) \geqslant \frac{c}{r_n} \leb \left( U^{(n)} \right) = \frac{c}{r_n}
\]

%%%%%%%%%%%%%%%%%%%%%%%%%%%%%%%%%%%%%%%%%%%%%%
\section{Porous Sets}
\label{s:porous}

We fix $n \geqslant 1$.
We define the set $Y(n, n+1)$ by
\begin{equation}
\label{e:Y(n,n+1)}
Y(n, n+1) = U^{(n)}  - L^{(n)} 
\end{equation}
Let $m > n$. 
We define the set $Y_{n, m}$ inductively by $Y(n, m) = Y(n, m-1) - L^{(m-1)} \cap Y(n, m-1)$. 
We define $X_n$ to be the infinite intersection $\bigcap_{m > n} Y(n,m)$. 

\begin{definition} 
\label{d:porous}
Let $a_n$ be a sequence of numbers satisfying $0 < a_n < 1$. 
A set $X$ is said to be $\{a_n \}$-porous if there is a sequence of coverings $\calE^{(n)} = \{ E(n,j) \}$ by intervals with disjoint interiors such that
\begin{itemize}
 \item $E(n,j) - X$ contains an interval $E'(n,j)$ such that $\leb(E'(n,j) \geqslant a_n \leb(E(n,j))$,
 \item $\bigcup_{\calE^{(n+1)}} E(n+1, k)$ is contained in $\bigcup_{\calE^{(n)}} E(n,j) - E'(n,j)$, and
 \item $\sup_{\calE^{(n)}}  \leb(E(n,j))$ tends to $0$ as $n \to \infty$.
 \end{itemize} 
\end{definition}
We call the intervals $E'(n, j)$ the \emph{holes at level $j$}.

\begin{proposition}
\label{p:porous}

The set $X_n$ is porous for the sequence 
\[
a_j = \frac{c}{r_{n+j}}
\]
\end{proposition}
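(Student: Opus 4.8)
The plan is to read the coverings required by \Cref{d:porous} directly off the horoball hierarchy: the shadows of the horoballs in $\calH^{(n+j-1)}$ will be the level-$j$ covering $\calE^{(j)}$, the nested inner shadows $\sh_{r_{n+j}}(\cdot)$ will be the holes, and the lower bound $a_j = c/r_{n+j}$ on the relative size of a hole will be nothing but \eqref{e:proportion}. To begin I would unwind the definition: by induction on $m$,
\[
Y(n,m) \;=\; U^{(n)} \setminus \bigl( L^{(n)} \cup L^{(n+1)} \cup \dots \cup L^{(m-1)} \bigr),
\]
so $X_n = U^{(n)} \setminus \bigcup_{m \geqslant n} L^{(m)}$; in particular $X_n \subseteq U^{(n)}$ and $X_n$ is disjoint from every $L^{(m)}$ with $m \geqslant n$. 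Since $\sh_0(H') \subseteq L^{(m)}$ for each $H' \in \calL^{(m)}$, no shadow of a horoball of $\calL^{(m)}$ meets $X_n$.

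For the coverings I would take, for $j \geqslant 1$, $\calE^{(j)} = \{\, \sh_0(H) : H \in \calH^{(n+j-1)},\ \sh_0(H) \cap X_n \neq \emptyset \,\}$. By construction these shadows have pairwise disjoint interiors. Their sizes tend to $0$: each $\sh_0(H') \in \calE^{(j+1)}$ has, by the inductive construction of $\calH^{(n+j)}$, a unique parent $H \in \calH^{(n+j-1)}$ (which then also lies in $\calE^{(j)}$, as $\sh_0(H') \subseteq \sh_0(H)$), and $\sh_0(H')$ cannot lie in $\sh_{r_{n+j}}(H)$ — otherwise $H' \in \calL^{(n+j)}$ and $\sh_0(H') \subseteq L^{(n+j)}$ would be disjoint from $X_n$ — so it lies in one of the two components of $\sh_0(H) \setminus \sh_{r_{n+j}}(H)$, each of which has length at most $(1 - c/r_{n+j})\leb(\sh_0(H))$ by \eqref{e:proportion}. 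Hence $\sup\{\leb(E):E \in \calE^{(j+1)}\} \leqslant (1 - c/r_{n+j})\sup\{\leb(E):E \in \calE^{(j)}\}$, and since $\sum_j 1/r_{n+j} = \sum_j 1/(\alpha \log(n+j)) = \infty$ the product $\prod_j (1 - c/r_{n+j})$ vanishes, giving the third condition of \Cref{d:porous}.

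For the holes I would set $E'(j,H) = \sh_{r_{n+j}}(H)$ for $\sh_0(H) \in \calE^{(j)}$; this is an interval inside $\sh_0(H)$ whose interior is covered, up to a $\leb$-null set, by the shadows of the horoballs of $\calL^{(n+j)}_H \subseteq \calL^{(n+j)}$, all of which lie in $L^{(n+j)}$. So $E'(j,H)$ meets $X_n$ only in that residual $\leb$-null set; granting (see below) that this residue too is disjoint from $X_n$, we get $E'(j,H) \subseteq \sh_0(H) \setminus X_n$, and by \eqref{e:proportion}
\[
\leb\bigl(E'(j,H)\bigr) \;=\; \leb\bigl(\sh_{r_{n+j}}(H)\bigr) \;\geqslant\; \frac{c}{r_{n+j}}\,\leb\bigl(\sh_0(H)\bigr) \;=\; a_j\,\leb\bigl(\sh_0(H)\bigr),
\]
which is the first condition. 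The second condition, $\bigcup \calE^{(j+1)} \subseteq \bigcup_{\calE^{(j)}}\bigl(E(j,\cdot) \setminus E'(j,\cdot)\bigr)$, is exactly the statement proved above that a shadow of $\calE^{(j+1)}$ lies in a component of $\sh_0(H) \setminus \sh_{r_{n+j}}(H) = \sh_0(H) \setminus E'(j,H)$ for its parent $H \in \calE^{(j)}$ (up to the countably many shadow endpoints, which are harmless and can be absorbed, e.g., by passing to the interiors of the $E'(j,H)$).

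The main obstacle is exactly the point-set bookkeeping hidden in ``$\calE^{(j)}$ covers $X_n$'' and in ``the residue is disjoint from $X_n$'': one must control the $\leb$-null leftover sets produced by iterating \Cref{l:horoball-shadows-fill-interval}, namely the points of $\sh_0(H)$ that lie in no shadow of the next refinement. These leftovers are themselves $\delta$-uniformly porous for the fixed $\delta$ of \Cref{l:defininte-proportion} — each stage of the iteration in \Cref{l:horoball-shadows-fill-interval} removes a definite proportion of what remains — so the part of $X_n$ they contain is a countable union of uniformly porous sets (hence $\{a_j\}$-porous, since $a_j \to 0 < \delta$), and it can be folded into the coverings above without harming the rate. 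With that in hand, $X_n$ is $\{a_j\}$-porous for $a_j = c/r_{n+j}$.
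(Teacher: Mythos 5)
Your construction coincides with the paper's proof: the paper likewise takes $\calE^{(j)}$ to be the shadows $\sh_0(H)$ of the horoballs of $\calH^{(n+j-1)}$ surviving in $Y(n,n+j-1)$, takes the hole in each such shadow to be $\sh_{r_{n+j}}(H)$, and obtains the bound $a_j = c/r_{n+j}$ from \Cref{e:proportion}; your additional verifications (the nesting of consecutive coverings and the fact that $\sup_{\calE^{(j)}}\leb(E)\to 0$) are details the paper leaves implicit and you handle them correctly. The one point where you go beyond the paper, namely the $\leb$-null residue left at each stage of iterating \Cref{l:horoball-shadows-fill-interval}, is settled by an unjustified claim: removing a definite proportion of the remaining \emph{measure} at each step makes that residue Lebesgue-null, but it does not make it uniformly porous, so the proposed absorption of the residue into the covering scheme is asserted rather than proved. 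Since the paper's own three-line proof silently ignores the same $\leb$-null residue (both in the covering of $X_n$ and in the requirement that the holes avoid $X_n$), this caveat does not separate your route from the paper's.
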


\begin{proof}
For each $j \geqslant 1$, we set $\calE^{(j)}$ to be collection of shadows $\sh_0(H)$ where $H$ is contained in $Y(n, n+j-1)$.
For each such shadow, we set the hole to be $\sh_{r_{n+j}}(H)$. 
By \Cref{e:proportion}, the required estimate for the relative measure follows. 
\end{proof} 

%%%%%%%%%%%%%%%%%%%%%%%%%%%%%%%%%%%%%%%%%%%%%%%%%
\section{Quasi-symmetries and Measure zero sets}
\label{s:null}

We say that two intervals $I$ and $J$ in $S^1$ are \emph{adjacent and equal} if they share an endpoint and have equal length. 
\begin{definition}
\label{d:quasi-symmetry}

A map $f:S^1 \to S^1$ is said to be a \emph{quasi-symmetry} if there exists a constant $K \geqslant  1$ such that for any adjacent and equal intervals $I$ and $J$ in $S^1$ we have
\[
\frac{1}{K} \leqslant \frac{\leb(f(I)) }{\leb(f(J))} \leqslant K
\] 
\end{definition}

\begin{definition}
\label{d:doubling}

A measure $\mu$ on $S^1$ is said to be a \emph{doubling} measure if there exists a constant $M> 0$ such that for any $p \in S^1$ and any radius $R > 0$, the interval $I(p, 2R)$ centred at $p$ and with radius $2R $ satisfies
\[
0 < \mu(I(p, 2R)) < M \mu(I(p, R))
\]
 \end{definition}

By a classical result of Ahlfors--Beurling, namely \cite[Theorem 1]{Ahl-Beu},  a map $f: S^1 \to S^1$ is quasi-symmetric if and only if it is given by integration with respect to a doubling measure, that is, if there exists a doubling measure $\mu$ and $\theta > 0$ such that 
\[
f(p') - f(p) = \int_p^{p'} d \mu
\] 

We begin by recalling \cite[Theorem 1]{Wu} 

\begin{theorem}
\label{t:Wu}

Suppose that $a_n$ is a sequence of numbers satisfying $0 < a_n < 1$. 
Suppose that for all real numbers $\kappa \geqslant 1$ the sum 
\[
\sum_{n=1}^\infty (a_n)^\kappa
\]
diverges. 
Then any $\{a_n\}$-porous set has measure zero for any doubling measure.
\end{theorem}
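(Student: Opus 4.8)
The plan is to exploit a single quantitative consequence of the doubling property: a hole that occupies a definite Lebesgue‑proportion of an interval also occupies a definite $\mu$‑proportion, with an exponent governed by the doubling constant. Everything else is a clean iteration.

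First I would record the standard ``doubling implies H\"older'' estimate. Fix a doubling measure $\mu$ on $S^1$ with constant $M$ (necessarily $M>1$, since $I(p,R)\subseteq I(p,2R)$), and set $\kappa=\max(1,\log_2 M)\geqslant 1$. Iterating the inequality of \Cref{d:doubling} about $\log_2(\leb(I)/\leb(J))$ times shows that there is a constant $c=c(M)\in(0,1)$ such that for every pair of intervals $J\subseteq I$ in $S^1$,
\[
\frac{\mu(J)}{\mu(I)}\;\geqslant\; c\left(\frac{\leb(J)}{\leb(I)}\right)^{\kappa}.
\]
This is the only place the doubling hypothesis is used, and it is exactly why \Cref{t:Wu} must assume divergence for \emph{every} $\kappa\geqslant 1$: the exponent $\log_2 M$ is unbounded over the class of all doubling measures, so a priori one does not know which $\kappa$ the estimate will deliver. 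I would also note the elementary fact that a finite doubling measure of full support on $S^1$ has no atoms (if $\mu(\{p\})>0$ one finds disjoint balls $B(q_i,|pq_i|/4)$ accumulating at $p$, each of $\mu$‑mass $\gtrsim_M \mu(\{p\})$, contradicting $\mu(S^1)<\infty$); consequently finitely many intervals with pairwise disjoint interiors are $\mu$‑essentially disjoint.

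Then I would run the iteration. Let $X$ be $\{a_n\}$‑porous, with coverings $\calE^{(n)}=\{E(n,j)\}$ and holes $E'(n,j)\subseteq E(n,j)$ as in \Cref{d:porous}, and set $G_n=\bigcup_j E(n,j)\supseteq X$. The displayed estimate applied to $E'(n,j)\subseteq E(n,j)$ gives $\mu(E'(n,j))\geqslant c\,a_n^{\kappa}\,\mu(E(n,j))$. The holes at level $n$ have pairwise disjoint interiors (they sit inside the $E(n,j)$, which do), and by the nesting condition $G_{n+1}$ meets each $E'(n,j)$ only in boundary points, hence in a $\mu$‑null set. Therefore
\[
\mu(G_{n+1})\;\leqslant\;\mu(G_n)-\sum_j\mu(E'(n,j))\;\leqslant\;\mu(G_n)-c\,a_n^{\kappa}\sum_j\mu(E(n,j))\;\leqslant\;\bigl(1-c\,a_n^{\kappa}\bigr)\mu(G_n),
\]
using $\sum_j\mu(E(n,j))\geqslant\mu(G_n)$ by subadditivity and $0<c\,a_n^{\kappa}<1$. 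Since $G_{n+1}\subseteq G_n$ and $\mu(S^1)<\infty$, downward continuity yields
\[
\mu(X)\;\leqslant\;\mu\Bigl(\bigcap_n G_n\Bigr)\;=\;\lim_n\mu(G_n)\;\leqslant\;\mu(G_1)\prod_{n\geqslant 1}\bigl(1-c\,a_n^{\kappa}\bigr).
\]
Finally $\sum_n c\,a_n^{\kappa}=c\sum_n a_n^{\kappa}$ diverges by the hypothesis applied to this particular $\kappa\geqslant 1$, so the infinite product vanishes and $\mu(X)=0$.

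The routine ingredients are the doubling‑to‑H\"older estimate and the product–sum dichotomy $\prod(1-x_n)=0\iff\sum x_n=\infty$. I do not expect a genuine obstacle: the content of the theorem is precisely that a doubling measure cannot concentrate on a set that is repeatedly punctured by holes of polynomially controlled relative size, and the divergence hypothesis is calibrated exactly to defeat every exponent $\log_2 M$ simultaneously. The only point needing a little care is the bookkeeping at interval endpoints — passing from ``disjoint interiors'' to genuine $\mu$‑disjointness of the holes and of $G_{n+1}$ from the holes — which is why I would dispatch the no‑atoms remark at the outset.
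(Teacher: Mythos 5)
Your argument is correct, but there is nothing in the paper to compare it against: \Cref{t:Wu} is imported verbatim as \cite[Theorem 1]{Wu} and never proved internally, so what you have supplied is a self-contained proof of the quoted result, and it follows what is essentially the standard (indeed Wu's own) route. The one analytic input is the estimate $\mu(J)/\mu(I)\geqslant c\,(\leb(J)/\leb(I))^{\kappa}$ for nested intervals $J\subseteq I$, obtained by iterating the inequality of \Cref{d:doubling} roughly $\log_2(\leb(I)/\leb(J))$ times with $\kappa=\max(1,\log_2 M)$; after that, the two structural conditions of \Cref{d:porous} give the multiplicative decay $\mu(G_{n+1})\leqslant(1-c\,a_n^{\kappa})\,\mu(G_n)$, and the divergence hypothesis applied to this single value of $\kappa$ forces the infinite product, hence $\mu(X)$, to vanish. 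Your bookkeeping is sound: non-atomicity of a doubling measure is exactly what lets you pass from ``disjoint interiors'' to $\mu$-disjointness of the holes and to discard the endpoint overlaps between $G_{n+1}$ and the holes, and your sketch of non-atomicity is fine (finiteness of $\mu$ on $S^1$ is implicit in \Cref{d:doubling} in this compact setting and automatic for the measures arising from the Ahlfors--Beurling correspondence). Two harmless observations: you never use the third condition of \Cref{d:porous} (mesh of the coverings tending to zero), so your argument proves slightly more than stated; and your remark that the exponent $\log_2 M$ is unbounded over all doubling measures correctly explains why divergence must be assumed for every $\kappa\geqslant 1$, which is precisely how \Cref{p:measure-zero-for-qs} invokes the theorem with $a_j\asymp 1/\log(n+j)$.
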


In other words, if the sequence $\{ a_n \}$ satisfies the hypothesis then for any quasi-symmetric map $f: S^1 \to S^1$ the image of an $\{a_n \}$-porous set has measure zero for the Lebesgue measure.

\medskip
Recall our definition of the set $X_n$. 
By \Cref{t:Wu}, we deduce:

\begin{proposition}
\label{p:measure-zero-for-qs}

Suppose that $f : S^1 \to S^1$ is a quasi-symmetry. We have
\[
\leb( f(X_n) ) =0
\]
\end{proposition}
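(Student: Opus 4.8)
The plan is to apply Theorem~\ref{t:Wu} directly to the set $X_n$, so the only real work is checking that the porosity sequence produced by Proposition~\ref{p:porous} satisfies the divergence hypothesis of Wu's theorem. By Proposition~\ref{p:porous}, $X_n$ is $\{a_j\}$-porous for $a_j = c/r_{n+j} = c/(\alpha \log(n+j))$. So it suffices to verify that for every real $\kappa \geqslant 1$ the series
\[
\sum_{j=1}^\infty (a_j)^\kappa = \sum_{j=1}^\infty \left( \frac{c}{\alpha \log(n+j)} \right)^\kappa
\]
diverges.

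This is a routine comparison. For fixed $\kappa \geqslant 1$ and fixed $n$, the general term is comparable to $(\log j)^{-\kappa}$ as $j \to \infty$ (up to a multiplicative constant depending on $c$, $\alpha$, $\kappa$, and $n$, which does not affect convergence). Since $(\log j)^{-\kappa} \to 0$ only polynomially-in-logarithm slowly, the series $\sum_j (\log j)^{-\kappa}$ diverges: indeed $(\log j)^{-\kappa} \geqslant 1/j$ for all $j$ large enough (because $j \geqslant (\log j)^\kappa$ eventually), and $\sum 1/j$ diverges. Hence $\sum_j (a_j)^\kappa = \infty$ for every $\kappa \geqslant 1$, so the hypothesis of Theorem~\ref{t:Wu} holds.

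By Theorem~\ref{t:Wu}, $X_n$ has measure zero for any doubling measure. By the Ahlfors--Beurling characterization (\cite[Theorem 1]{Ahl-Beu}) recalled above, any quasi-symmetry $f : S^1 \to S^1$ is given by integration against a doubling measure $\mu_f$; concretely, $\leb(f(E)) = \mu_f(E)$ for intervals $E$ (and hence for Borel sets, by countable additivity and regularity). Therefore $\leb(f(X_n)) = \mu_f(X_n) = 0$, which is the assertion of the proposition. The only point requiring a word of care is that ``$X_n$ has measure zero for the doubling measure $\mu_f$'' translates into ``$f(X_n)$ has Lebesgue measure zero'', which is exactly the content of writing $f$ as the integral of $\mu_f$; I would state this explicitly but it is immediate from the definitions. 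I do not anticipate any genuine obstacle: the argument is a direct chaining of Proposition~\ref{p:porous}, the elementary divergence estimate, Theorem~\ref{t:Wu}, and Ahlfors--Beurling.

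\begin{proof}
By Proposition~\ref{p:porous}, the set $X_n$ is $\{a_j\}$-porous for the sequence $a_j = c/r_{n+j} = c/(\alpha \log(n+j))$. We claim this sequence satisfies the hypothesis of Theorem~\ref{t:Wu}. Fix a real number $\kappa \geqslant 1$. For all sufficiently large $j$ we have $\log(n+j) \leqslant j^{1/\kappa}$ (since the left side grows logarithmically and the right side polynomially), hence
\[
(a_j)^\kappa = \left( \frac{c}{\alpha \log(n+j)} \right)^\kappa \geqslant \left( \frac{c}{\alpha} \right)^\kappa \frac{1}{j}.
\]
Since $\sum_j 1/j$ diverges, so does $\sum_j (a_j)^\kappa$. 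As $\kappa \geqslant 1$ was arbitrary, Theorem~\ref{t:Wu} applies and shows that $X_n$ has measure zero for every doubling measure.

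Now let $f : S^1 \to S^1$ be a quasi-symmetry. By the Ahlfors--Beurling theorem (\cite[Theorem 1]{Ahl-Beu}) recalled above, $f$ is given by integration with respect to a doubling measure $\mu$; in particular $\leb(f(E)) = \mu(E)$ for every interval $E \subset S^1$, and hence for every Borel set by countable additivity and regularity. Applying the previous paragraph to $\mu$ gives $\mu(X_n) = 0$, and therefore
\[
\leb(f(X_n)) = \mu(X_n) = 0. \qedhere
\]
\end{proof}
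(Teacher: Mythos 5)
Your proof is correct and follows essentially the same route as the paper: Proposition~\ref{p:porous} gives the porosity sequence $a_j = c/(\alpha\log(n+j))$, the divergence of $\sum_j a_j^\kappa$ for every $\kappa \geqslant 1$ feeds into Theorem~\ref{t:Wu}, and the Ahlfors--Beurling correspondence translates ``null for every doubling measure'' into ``Lebesgue-null image under every quasi-symmetry.'' The paper's proof is simply terser, asserting the divergence and invoking Wu's theorem together with the remark recorded just after its statement, whereas you spell out the harmonic-series comparison and the interval-to-Borel bookkeeping explicitly.
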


\begin{proof}
Note that for any $\kappa \geqslant 1$, the sequence $a_j$ in \Cref{p:porous} satisfies 
\[
\sum_j  a_j^\kappa  = \sum_j  \left( \frac{c}{\alpha \log (n+j)} \right)^\kappa = \infty
\]
The proposition follows from~\Cref{t:Wu}. 
\end{proof}

%%%%%%%%%%%%%%%%%%%%%%%%%%%%%%%%%%%%%%%%%%%%%%
\section{Stationary measures and horoball excursions}
\label{s:excursions-for-stationary-measure}

\subsection{Horoball excursions and nested sets in the thick part}
The action of the (virtually free) group $\Gamma$ on the thick part $\calT$ is co-compact.
By  the \u{S}varc-Milnor lemma, the thick part $\calT$ with the induced path metric $D$ is quasi-isometric to $\Gamma$ for any word metric on $\Gamma$ defined by a finite symmetric generating set.
Let $\pi \from \HH \to \calT$ be the closest point projection. 
 We call the projection $\pi(\gamma)$ of a hyperbolic geodesic $\gamma$ a \emph{projected path}.

\medskip
Let $x$ be a point in $\calT$ and $H$ a horoball with the point at infinity $p_\infty(H) \in S^1$. 
With $x$ as a base-point, let $\gamma_\infty (H)$ be the geodesic ray from $x$ to $p_\infty(H)$.
Recall that $\gamma^- (H)$ and $\gamma^+ (H)$ are rays from $x$ tangent to $H$.
Also recall that $\gamma^-_r$ and $\gamma^+_r$ are rays from $x$ that have excursions of size $r$ in $H$.

\medskip
Let $s^-_r$ denote the arc $\pi(\gamma^-_r) \cap \partial H$.
Let $x^-_1$ and $x^-_2$ be the endpoints of $s^-_r$ with $x^-_r$ the farthest (among the two) from $x$. 
For any geodesic ray $\gamma$ from $x$ such that $\gamma^-_r, \gamma, \gamma_\infty$ are in the counter-clockwise order, the projected path $\pi(\gamma)$ contains the arc $s^-_r$. 
For such a ray, let $t^-_r$ denote the initial segment of $\pi(\gamma)$ from $x$ to $x^-_2$. 
Let $R^-(r)$ be the set of such rays and define the \emph{thick shadow}
\[
C^- (x, H; r) = \bigcup_{\gamma \in R^-(r) } \pi(\gamma) - t^-_r
\]

\medskip
We similarly define $s^+_r$ and use it to define the thick shadow $C^+ (x, H; r)$ using the ray $\gamma^+_r$.

\medskip
We define $\bdy_\infty C^- (x, H; r)$ to be the set of endpoints at infinity of the rays in $R^-(r)$. 
It follows that $\bdy_\infty C^- (x, H; r) = [p^-_r (H) , p_\infty (H)]$ and $\bdy_\infty = [p_\infty (H), p^+_r (H)]$ and so their union is $\sh_r (H)$. 

\begin{lemma}
\label{l:nesting} 
There exists constants $K \geqslant 1$ and $r_0 > 0$ that depend only on $\Gamma$ and the choice of cusp neighbourhoods such that for all $r$ large enough and $(r'  - r)/K - r_0 > 0$, we have 
\begin{equation}\label{e:nesting}
D (\calT - C^-(x, H; r), C^-(x, H; r')) = \frac{r' - r}{K} - r_0,
\end{equation}
\end{lemma}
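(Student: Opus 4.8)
The plan is to reduce the estimate to a computation in a single standard horoball, using the fact that the thick shadows $C^-(x,H;r)$ are, up to the bounded distortion coming from the \u{S}varc--Milnor quasi-isometry between $\calT$ and $\Gamma$, controlled by how deep a geodesic ray penetrates $H$. First I would recall the precise relationship between the excursion parameter $r$ and the geometry near $\bdy H$: a ray from $x$ with excursion exactly $r$ in $H$ enters $\bdy H$, travels a path-length $r$ along the horocycle, and exits; in the upper half-plane model with $H$ the horoball based at $\infty$, this means the ray rises to Euclidean height comparable to $r$ (since the path metric on the horocycle at height $t$ is $1/t$ times Euclidean length, and a geodesic semicircle of Euclidean diameter $d$ meeting the horocycle $\{y=1\}$ subtends a horocyclic arc of length $\approx d$, while rising to height $\approx d$). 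Consequently the hyperbolic distance inside $H$ between the level where an excursion-$r$ ray turns around and the level where an excursion-$r'$ ray turns around is $\log(r'/r)$ up to an additive constant — wait, that is the wrong order; I should instead measure along $\bdy H$, where the two arcs $s^-_r$ and $s^-_{r'}$ are nested horocyclic segments, and the relevant quantity is the path-distance in $\calT$ (equivalently, along $\bdy H$, since $\bdy H \subset \calT$) between the complement of one thick shadow and the next.

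The key steps, in order: (1) Set up the model so that $\bdy H$ is identified with a horocycle and the arcs $s^-_r$, as $r$ varies, form an exhaustion of $\bdy H$ on the side of $p^-$; identify $\calT - C^-(x,H;r)$ with the part of the thick part "outside" the initial segment $t^-_r$, so that its $D$-distance to $C^-(x,H;r')$ is realised near $\bdy H$. (2) Show that on $\bdy H$ the path-metric distance between the endpoint $x^-_r$ of $s^-_r$ and the endpoint $x^-_{r'}$ of $s^-_{r'}$ is exactly $r' - r$ by definition of excursion and additivity of arclength along the horocycle — this is where the clean formula comes from, since excursions are literally measured in the path metric on $\bdy H$. (3) Pass from $\bdy H$ to $\calT$: any path in $\calT$ from $\calT - C^-(x,H;r)$ to $C^-(x,H;r')$ must cross the "gate" region near $x^-_r$, and projecting such a path to $\bdy H$ can only decrease length by a bounded multiplicative factor $K$ and a bounded additive error $r_0$ (closest-point projection to the horocycle is coarsely Lipschitz; this is basic hyperbolic geometry in a horoball), giving the inequality $D(\dots) \geqslant (r'-r)/K - r_0$; the matching upper bound comes from exhibiting an explicit path along $\bdy H$ realising length $\approx r'-r$. (4) Combine to get the stated equality (or two-sided bound), valid once $r$ is large enough that the projection estimates apply and $(r'-r)/K - r_0 > 0$ so the expression is meaningful.

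I would also note that the constants $K$ and $r_0$ depend only on $\Gamma$ and the cusp neighbourhoods because there are finitely many cusps and all horoballs in $\calH$ are $\Gamma$-translates of finitely many standard ones, so the local geometry near every $\bdy H$ is uniformly the same; the \u{S}varc--Milnor quasi-isometry constants between $\calT$ and a word metric on $\Gamma$ are likewise uniform.

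The main obstacle I expect is step (3): carefully controlling how much length a path in $\calT$ can save by going "around" through the thick part instead of staying on the horocycle $\bdy H$. One must rule out shortcuts through other horoballs or through the bulk of $\calT$, and show the optimal path essentially tracks $\bdy H$, so that the horocyclic path-metric computation of step (2) governs the answer up to the quasi-isometry constants. This requires a genuine (if routine) argument in the geometry of the horoball packing, e.g. that leaving and re-entering $H$ costs at least a definite amount, or more simply that the nearest-point projection $\pi$ restricted to a neighbourhood of $\bdy H$ is a $(K, r_0)$-quasi-isometry onto its image; getting the \emph{equality} rather than just coarse equivalence will require pinning down that the minimal path is, up to the additive error $r_0$, exactly the horocyclic arc.
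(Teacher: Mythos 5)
Your outline follows the same underlying idea as the paper, but it leaves open exactly the step that carries all the content, so it is worth being precise about where that step lives. The paper's proof is one line: by \cite[Lemma 2.1]{Gad-Mah-Tio}, the projection $\pi(\gamma)$ of a hyperbolic geodesic to the thick part is a uniform quasi-geodesic in $(\calT, D)$; since $C^-(x,H;r)$ is defined with a projected arc as a prefix, the gates for parameters $r < r'$ are separated along that prefix by a horocyclic sub-arc of length $r'-r$ (your step (2)), and quasi-geodesity converts this into $D$-separation of order $(r'-r)/K - r_0$, with $K, r_0$ the uniform quasi-geodesic constants (uniformity over all $H$ comes, as you say, from $\Gamma$-equivariance of the horoball packing). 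Your step (3) is precisely this point, and your two suggested ways to handle it are not equally good: the danger is not that a competing path leaves and re-enters $H$, but that it takes a long detour through the bulk of $\calT$ --- recall that in $\HH$ itself the horocycle is exponentially distorted (horocyclic length $\ell$ can be shortcut to hyperbolic distance about $2\log\ell$), so one must genuinely prove that no comparable shortcut survives in the path metric $D$. That is a real statement about $(\calT,D)$ (equivalently, quasiconvexity/undistortedness of the peripheral cyclic subgroups in the virtually free lattice $\Gamma$, or the quasi-geodesity of projected paths), and the paper simply imports it from the cited lemma; your alternative suggestion, that nearest-point projection to $\bdy H$ is a coarse quasi-isometry near $\bdy H$, is essentially a restatement of the needed fact rather than an argument for it. So as a proof your proposal has a gap at its own ``main obstacle''; it becomes complete once you either cite the projected-path quasi-geodesic lemma or prove undistortedness of $\bdy H$ in $(\calT,D)$ directly.

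One further remark: do not spend effort pinning down the \emph{equality} in the display. As stated the ``$=$'' can only be read coarsely (a single pair $(K,r_0)$ cannot make it an exact identity for all $r,r'$ and all $H$), and only the lower bound $D(\calT - C^-(x,H;r),\, C^-(x,H;r')) \geqslant (r'-r)/K - r_0$ is used later, in the proof sketch of exponential decay for excursions; the paper's citation delivers exactly that, and your worry about identifying the minimal path up to additive error is unnecessary for the application.
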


\begin{proof}
By \cite[Lemma 2.1]{Gad-Mah-Tio}, a projected path is a quasi-geodesic in $\calT$.
Since $C^- (x, H;R)$ is defined with a projected arc as a prefix, the lemma follows.
\end{proof}

\subsection{Exponential decay for cusp excursions}
\label{s:exponential-decay}

Let $\mu$ be a non-elementary probability distribution on $\Gamma$.
We consider the random walk on $\Gamma$ given by $\mu$.
We call the random product $\omega_n = g_1 \cdots g_n$ (where each $g_i$ is sampled independently by $\mu$) a sample path of length $n$. 
Sample paths of length $n$ are distributed by the $n$-fold convolution $\mu^{(n)}$. 
The {\em path space for the random walk} is the space $\Gamma^{\NN}$ of one-sided infinite sequence in $\Gamma$ equipped with the push-forward of the product measure under the map $(g_1, g_2, \cdots) \to (\omega_1, \omega_2, \cdots)$. 
We denote this measure as $\nu$ and it has marginals $\mu^{(n)}$.
The group $\Gamma$ acts on the left while the random walk increments $g_n$ act by right multiplication $\omega_{n-1} \to \omega_n$. 

\medskip
Let $x$ be any base-point in $\HH$. 
For $\nu$-almost every infinite sample path $\omega^+$, the sequence $\omega^+_n  x$ in $\HH$ converges to a point $p = p(\omega)$ in $S^1$. 
We may then pushforward $\nu$ to get a measure on $S^1$ that gives the distribution of $p$.
In a slight abuse of notation, we continue to denote the measure on $S^1$ by $\nu$. 

\medskip
We now define what it means for a random walk to have exponential decay for excursions. 

\begin{definition}
\label{d:good-excursion-decay}

A random walk has \emph{exponential decay for excursions} if there exists constants $c \geqslant 0$ and $\alpha > 0$ such that for any horoball $H$ in $\calH$ and all constants $r, s \geqslant 0$ such that $ s \geqslant r+ c $, we have 
\[
\nu  (\sh_s(H)) \leqslant e^{-\alpha} \nu (\sh_r(H)).
\]
\end{definition}

\medskip
Note that the exponential decay implies that for $r \geqslant c$ and $\beta = \alpha((1/c) - (1/r))$ 
\[
\nu (\sh_r (H)) \leqslant e^{-\beta r} \nu (\sh_0(H))
\]

\medskip
For completeness, we include a proof sketch that finitely supported random walks that generate the lattice as a semi-group have the desired exponential decay for excursions.
As a special case of more general exponential decay of shadows (see \cite[Lemma 2.10]{Mah}), the exponential decay is true for any finitely supported non-elementary random walk.
In the specific contexts mentioned here, we expect the exponential decay to be true for all non-elementary random walks with finite word metric first moment, but this is open at present.

\begin{proof}[Proof sketch for finitely supported random walks that generate $\Gamma$:]
For any pair of points $z \neq z'$ in $\calT$, let $H(z, z')$ be the half-space (in the metric $D$ on $\calT$) of points closer to $z'$ than $z$.
Let $\bdy_\infty H(z, z')$ be the limit set at infinity of $H(z,z')$. 

\medskip
Suppose that $H$ is a horoball.
Suppose that $\xi > 0$ is a constant. 
Suppose that $r' = r + 3 \xi$. 
Recall that $x \in \calT$ is our base-point. 
We consider the thick shadows $C^- (x, H; r') \subset C^- (x, H; r)$ and similarly $C^+ (x, H; r') \subset C^+(x, H; r)$. 
We define the set $C^- (x, H; r + \xi) - C^- (x, H; r+2\xi)$ to be the \emph{midpoint set} for the pair $C^- (x, H; r') \subset C^- (x, H; r)$. 
We denote the set by $M^- (\xi)$. 
Similarly, we define the set  $C^+ (x, H; r + \xi) - C^+ (x, H; r+2\xi)$ to be the midpoint set for the pair $C^+ (x, H; r') \subset C^+(x, H; r)$. 
We denote it by $M^+ (\xi) $.

\medskip
By \Cref{l:nesting}, there is $\xi' > 0$ such that for any $\xi \geqslant \xi'$ there is $u =  u(\xi) > 0$ such that such that for any point $x'$ in the midpoint set $M^- (\xi)$ (similarly $M^+ (\xi)$) there is a point $x''$ with $D (x',x'') = u$ such that $\bdy_\infty H(x',x'')$ is contained in the interior of $[p^-_r (H) , p^-_{r'} (H)]$ (similarly $[p^+_{r'} (H), p^+_r (H)]$).

\medskip
Since the random walk generates $\Gamma$, there is a constant $\kappa > 0$ such that for any $z$ such that $D(x, z) = u$, we have $\nu (\bdy_\infty H(x, z) ) > \kappa$. 

\medskip
Let $\delta$ be maximum of $d_\calT (x_0, g x_0)$ over $g$ in support of $\mu$. 
Suppose that $H$ is a horoball. 
Recall the constant $r_0$ from~\Cref{e:nesting}. 
Let $\xi = \max \{ \xi', K(\delta + r_0) \}$.
Suppose that $r' > r$ be such that $r' = r + 3 \xi$. 
Then any sample path that converges to $\sh_{r'}(H)$ intersects $M^- (\xi)$ or $M^+ (\xi)$. 
Conditioning on the point of intersection, we conclude that the probability for a sample path starting at this point to converge in to $\sh_r (H) - \sh_{r'} (H) $ is at least $\kappa$.
The claimed decay for $\nu$ now follows. \qedhere
\end{proof}

\subsection{Upper bounds for excursions in the horoball heirarchy}
\label{s:excursion-upper-bounds}

We now return to the horoball hierarchies $\calH^{(n)}$  constructed in  \Cref{s:horoball-heirarchies}.
Suppose that $p$ lies in $S^1$.
Suppose that $\gamma$ is the hyperbolic ray from the base point $x$  to $p$.
If $\gamma$ has an excursion in a horoball $H$ in $\calH^{(n)}$ then we denote the excursion by $E^{(n)}(\gamma)$. 
By definition, for $n \geqslant 2$, the set $L^{(n)}$ is the set of points $p$ in $S^1$ such that the ray $\gamma$ from $x$ to $p$ satisfies $E^{(n-1)}(\gamma) \geqslant r_n$. 

\medskip
We prove:

\begin{proposition}
\label{p:excursion-upper-bounds}

There exist a constant $\alpha > 0$ that depends only on $\mu$ such that for the horoball collections $\calH^{(n)}$ and for $\nu$-almost every $p \in S^1$ there is a positive integer $N = N(\gamma)$ such that with $r_n = \alpha \log n$, we have 
\[
E^{(n-1)}(\gamma) < r_n
\]
for all $n > N$. 
\end{proposition}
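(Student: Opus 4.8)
The plan is to read the statement as an application of the easy half of the Borel--Cantelli lemma. By the description recalled just before the proposition, for $n \geqslant 2$ the set $L^{(n)}$ is precisely the set of $p \in S^1$ whose ray $\gamma$ from $x$ has $E^{(n-1)}(\gamma) \geqslant r_n$, so $p \notin L^{(n)}$ says exactly that $E^{(n-1)}(\gamma) < r_n$ (vacuously so when $\gamma$ misses every horoball of $\calH^{(n-1)}$). Hence the conclusion amounts to the assertion that $\nu$-almost every $p$ lies in only finitely many of the $L^{(n)}$, and for this it suffices to choose $\alpha$ so that $\sum_{n \geqslant 2} \nu(L^{(n)}) < \infty$.

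The key estimate is an upper bound on $\nu(L^{(n)})$. Since $L^{(n)} = \bigcup_{H \in \calH^{(n-1)}} L^{(n)}_H$ with $L^{(n)}_H \subseteq \sh_{r_n}(H)$, and since the shadows $\sh_0(H)$ for $H \in \calH^{(n-1)}$ have pairwise disjoint interiors with each $\sh_{r_n}(H)$ contained in the interior of the corresponding $\sh_0(H)$ by nesting, we obtain
\[
\nu\bigl(L^{(n)}\bigr) \;\leqslant\; \sum_{H \in \calH^{(n-1)}} \nu\bigl(\sh_{r_n}(H)\bigr).
\]
Iterating the one-step estimate of \Cref{d:good-excursion-decay} (and using nesting to fill in intermediate radii) gives constants $A \geqslant 1$ and $\beta > 0$, depending only on $\mu$, with $\nu(\sh_r(H)) \leqslant A\,e^{-\beta r}\,\nu(\sh_0(H))$ for every horoball $H$ and every $r \geqslant 0$. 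Since $\nu$ is non-atomic (being the stationary measure of a non-elementary random walk) and the interiors of the $\sh_0(H)$, $H \in \calH^{(n-1)}$, are pairwise disjoint, $\sum_{H \in \calH^{(n-1)}} \nu(\sh_0(H)) \leqslant 1$. With $r_n = \alpha \log n$ this yields $\nu(L^{(n)}) \leqslant A\, n^{-\alpha\beta}$.

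It then remains only to fix $\alpha$: taking any $\alpha > 1/\beta$ --- which is legitimate because $\beta$ depends only on $\mu$ --- makes $\sum_{n \geqslant 2} \nu(L^{(n)}) \leqslant A \sum_{n \geqslant 2} n^{-\alpha\beta}$ converge. The easy direction of Borel--Cantelli then gives, for $\nu$-almost every $p$, an integer $N = N(\gamma)$ with $E^{(n-1)}(\gamma) < r_n$ for all $n > N$, which is the claim.

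I do not expect a genuine obstacle here: the real work has already gone into the construction of the hierarchy in \Cref{s:horoball-heirarchies} and into the exponential-decay estimate. The two points needing a little care are promoting the single-step decay of \Cref{d:good-excursion-decay} to a uniform geometric rate $e^{-\beta r}$ in $r$ with $\beta$ depending only on $\mu$, and noting that enlarging $\alpha$ in order to secure summability costs nothing downstream: the porosity sequence $a_j = c/r_{n+j}$ of \Cref{p:porous} still satisfies the hypothesis of \Cref{t:Wu} for every value of $\alpha$, so the two uses of $\alpha$ are compatible.
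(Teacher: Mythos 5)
Your proposal is correct and follows essentially the same route as the paper: choose $\alpha > 1/\beta$ where $\beta$ is the uniform exponential rate obtained by iterating \Cref{d:good-excursion-decay}, bound $\nu(L^{(n)})$ by $e^{-\beta r_n} = n^{-\alpha\beta}$, and conclude by the easy direction of Borel--Cantelli. The only difference is that you spell out steps the paper leaves implicit (summing $\nu(\sh_{r_n}(H)) \leqslant e^{-\beta r_n}\nu(\sh_0(H))$ over the disjoint level-$(n-1)$ shadows, and promoting the one-step decay to a uniform rate), which is a faithful expansion rather than a different argument.
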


\begin{proof}
We fix $\alpha > 0$ to satisfy $\alpha > 1/ \beta$. 
We then consider horoball heirarchies $\calH^{(n)}$ corresponding to the sequence $r_n = \alpha \log n$. 
By the exponential decay of excursion from  \Cref{d:good-excursion-decay}, it follows that the
\[
 \nu \left( L^{(n)} \right) < e^{-\beta r_n} = e^{-\beta \alpha \log n} = \frac{1}{n^{\beta \alpha}}
 \]
 It follows that 
\[
\sum_{n \geqslant 2}  \nu(L^{(n)}) < \infty
\]
By the Borel--Cantelli lemma (the easy direction), it follows that $\nu( \limsup L^{(n)}) = 0$. 
The conclusion follows. 
\end{proof} 

We deduce:

\begin{corollary}
\label{c:support}
We have 
\[
\nu \left( \bigcup_{n \geqslant 1} X_n  \right) = 1
\]
\end{corollary}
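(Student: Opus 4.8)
The plan is to unwind the two recursions defining $Y(n,m)$ and $X_n$ and reduce \Cref{c:support} to two facts. Expanding $Y(n,m) = Y(n,m-1) - \bigl(L^{(m-1)} \cap Y(n,m-1)\bigr) = Y(n,m-1) \setminus L^{(m-1)}$ gives, for $m > n$,
\[
Y(n,m) = U^{(n)} \setminus \bigl( L^{(n)} \cup \cdots \cup L^{(m-1)} \bigr), \qquad X_n = \bigcap_{m>n} Y(n,m) = U^{(n)} \setminus \bigcup_{k \geqslant n} L^{(k)}.
\]
Since the $U^{(m)}$ are nested decreasing (each $\sh_0(K)$ with $K \in \calH^{(m)}_H$ lies in $\sh_0(H)$), one reads off the inclusion
\[
\Bigl( \bigcap_{m \geqslant 1} U^{(m)} \Bigr) \setminus \limsup_n L^{(n)} \subseteq \bigcup_{n \geqslant 1} X_n,
\]
because if $p$ lies in every $U^{(m)}$ and outside $L^{(k)}$ for all $k \geqslant N$, then $p \in U^{(N)} \setminus \bigcup_{k\geqslant N} L^{(k)} = X_N$. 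So it is enough to show $\nu\!\bigl(\limsup_n L^{(n)}\bigr) = 0$ and $\nu\!\bigl(\bigcap_m U^{(m)}\bigr) = 1$.

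The first of these is exactly what \Cref{p:excursion-upper-bounds} provides: its proof shows $\sum_n \nu(L^{(n)}) < \infty$, so by the easy direction of Borel--Cantelli $\nu$-almost every $p$ lies in only finitely many $L^{(n)}$. The second, $\nu(U^{(m)}) = 1$ for every $m$, is the substantive point. Because $\calH^{(m)}$ is built from $\calH^{(m-1)}$ by subdividing each shadow $\sh_0(H)$ via \Cref{l:horoball-shadows-fill-interval}, it suffices to prove the single statement: \emph{for every interval $I$, the disjoint cover $\bigcup_{H \in \calH(I)} \sh_0(H)$ of \Cref{l:horoball-shadows-fill-interval} has full $\nu$-measure in $I$.} I would prove this by rerunning the proof of \Cref{l:horoball-shadows-fill-interval} with a $\nu$-analogue of \Cref{l:defininte-proportion} in place of the $\leb$-version --- a uniform constant $\delta' > 0$ such that every interval $I$ contains finitely many pairwise shadow-disjoint horoballs with total shadow $\nu$-mass at least $\delta' \nu(I)$ --- whereupon the usual "recurse into the complementary intervals" argument forces the uncovered $\nu$-mass after $k$ stages to be at most $(1-\delta')^k \nu(I) \to 0$. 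Running each covering step so as to capture a definite proportion in both $\leb$ and $\nu$ (for instance alternating the two kinds of step) changes nothing in the constructions of \Cref{s:horoball-heirarchies} and \Cref{s:porous}, which only use disjointness and $\leb$-fullness of the $U^{(m)}$, and so produces a hierarchy with $\nu(U^{(m)}) = 1$ as well. Combined with $\nu(\limsup_n L^{(n)}) = 0$ and the inclusion above, this gives $\nu(\bigcup_n X_n) = 1$.

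The hard part is the $\nu$-definite-proportion estimate. Since $\nu$ is singular with respect to $\leb$, it cannot be read off from \Cref{l:defininte-proportion} by comparing the two measures on individual shadows; it has to be extracted from the stationary measure itself. I would try to obtain it either from \Cref{p:horoballs-of-a-certain-size} together with a lower shadow bound for $\nu$ --- an inequality of the form $\nu(\sh_0(H)) \gtrsim \leb(\sh_0(H))^{s}$ for a fixed exponent $s$, of the kind available for stationary measures of non-elementary random walks on non-uniform lattices --- or directly from the fact that $\nu$-almost every geodesic ray makes infinitely many cusp excursions (equivalently $\nu(D) = 0$, a consequence of the exponential decay hypothesis in \Cref{d:good-excursion-decay}), using an ergodicity or measurable-selection argument to show that the "seam" set $S^1 \setminus U^{(1)}$ is $\nu$-null.
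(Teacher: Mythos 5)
Your reduction is accurate, and the half of your argument that overlaps with the paper is exactly the paper's proof: unwinding the recursion gives $X_n = U^{(n)} \setminus \bigcup_{k \geqslant n} L^{(k)}$, and $\nu\bigl(\limsup_n L^{(n)}\bigr) = 0$ is precisely what \Cref{p:excursion-upper-bounds} delivers via Borel--Cantelli. The paper's own proof of \Cref{c:support} consists of only this step: it passes from ``$E^{(n-1)}(\gamma) < r_n$ for all $n > N$'' directly to ``$p \in X_N$'', with no separate discussion of membership in $U^{(N)}$. So you have correctly isolated the one place where your argument goes beyond the paper's, namely whether $\nu$-almost every point lies in the covers $U^{(m)}$, which the construction in \Cref{s:horoball-heirarchies} controls only in Lebesgue measure while $\nu$ is singular.

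The problem is that this step --- which you yourself call the hard part --- is not proved, so as written the proposal does not establish the corollary. The $\nu$-analogue of \Cref{l:defininte-proportion} (a uniform $\delta' > 0$ so that every interval $I$ contains pairwise shadow-disjoint horoballs of total shadow $\nu$-mass at least $\delta' \nu(I)$) is only posited, and neither route you sketch is available under the theorem's hypotheses: a lower shadow bound $\nu(\sh_0(H)) \gtrsim \leb(\sh_0(H))^{s}$ appears nowhere in the paper and does not follow from ``non-elementary with exponential decay for excursions'' (\Cref{d:good-excursion-decay}); and even granting it, \Cref{p:horoballs-of-a-certain-size} yields a total $\nu$-mass of order $\leb(I)\,\rho^{n(s-1)}$, which need not dominate $\nu(I)$ without a matching Frostman-type upper bound, so the definite proportion would still not follow. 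The alternative ``ergodicity or measurable selection'' argument that the seam set $S^1 \setminus U^{(1)}$ is $\nu$-null is not specified at all; note that this seam is a Cantor-type set produced by the purely Lebesgue-based iteration of \Cref{l:horoball-shadows-fill-interval}, and nothing in that construction rules out a singular measure charging it. In short, everything in your plan rests on an unproved covering lemma for $\nu$; to be faithful to the paper you should either supply that lemma or argue, as the paper implicitly takes for granted in deducing $p \in X_N$ from the excursion bound, that the hierarchy can be arranged with $\nu(U^{(m)}) = 1$ --- and that is genuine additional content rather than a formality, precisely because $\nu \perp \leb$.
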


\begin{proof}
Suppose that $p$ is $\nu$-typical and suppose that $\gamma$ is the hyperbolic ray from $x$ to $p$.
By \Cref{p:excursion-upper-bounds}, there exists a positive number $N$ such that $E^{(n-1)}(\gamma) < r_n$ for all $n > N$. 
Thus, $p$ lies in $X_N$ and we get our conclusion.
\end{proof}

\begin{proof}[Proof of \Cref{t:strong-singularity}]
Set $X = \cup_{n \geqslant 1} X_n$.
Suppose that $f : S^1 \to S^1$ is a quasi-symmetry.
Consider $f(X)$.
By \Cref{p:measure-zero-for-qs}, we have
\[
\leb (f(X)) \leqslant \sum_{n \geqslant 1} \leb(f(X_n)) = 0
\]
On the other hand, by definition of a pushforward measure and \Cref{c:support}
\[
f^* \nu(f(X)) = \nu (X) = 1
\]
Thus $f^\ast \nu$ is singular, as desired.
\end{proof}

\end{document}